\newtheorem{thm}{Theorem}[section]
\newtheorem{lem}[thm]{Lemma}
\newtheorem{prob}[thm]{Problem}
\newtheorem{que}[thm]{Question}
\newtheorem{conj}[thm]{Conjecture}
\newtheorem{claim}[thm]{Claim}
\theoremstyle{definition}
\numberwithin{equation}{section}
\def\qedB{{\hfill\enspace\vrule height8pt depth0pt width8pt}}
\let\svthefootnote\thefootnote
\newcommand\blankfootnote[1]{%
	\let\thefootnote\relax\footnotetext{#1}%
	\let\thefootnote\svthefootnote%
}
\begin{document}

\title{Optimal bisections of directed graphs}

\author{Guanwu Liu
~~~~~
Jie Ma
~~~~~
Chunlei Zu
}

\date{}

\maketitle\

\begin{abstract}
In this paper, motivated by a problem of Scott \cite{Scott2005} and a conjecture of Lee, Loh and Sudakov \cite{Lee2014} we consider bisections of directed graphs.
We prove that every directed graph with $m$ arcs and minimum semidegree at least $d$ admits a bisection in which at least $\left(\frac{d}{2(2d+1)}+o(1)\right)m$ arcs cross in each direction.
This provides an optimal bound as well as a positive answer to a question of Hou and Wu \cite{Hou2017} in a stronger form.
\end{abstract}

\blankfootnote{\noindent School of Mathematical Sciences, University of Science and Technology of China, Hefei, Anhui 230026, China.
Research supported by the National Key R and D Program of China 2020YFA0713100,
National Natural Science Foundation of China grant 12125106, and Anhui Initiative in Quantum Information Technologies grant AHY150200.}

\section{Introduction}
As one of the most fundamental problems in the common field of combinatorics and theoretical computer science,
graph partitioning problems have been studied extensively in the literature.
A well-known example is the Max-Cut problem, which asks to find a vertex-partition $V_1\cup V_2$ of a given graph with $m$ edges that maximizes the number of edges between $V_1$ and $V_2$.
A uniformly random partition shows that this number is at least $m/2$.
This bound was improved to $m/2+(\sqrt{2m+1/4}-1/2)/4$ by a prominent result of Edwards \cite{Edwards1973,Edwards1975} which is optimal for infinitely many graphs (see Alon \cite{Alon1996} for more advances).

In past decades, a new area of {\it judicious} partitioning problems has arisen,
where the common theme seeks for partitions optimizing several quantities simultaneously.
A celebrated result in this area is due to Bollob\'as and Scott \cite{Bollobas1999},
who showed that any graph with $m$ edges admits a bipartition $V_1\cup V_2$ of its vertex set such that the number of edges between $V_1$ and $V_2$ is at least $m/2+(\sqrt{2m+1/4}-1/2)/4$ (i.e., Edwards' bound)
and moreover for each $i\in \{1,2\}$, the number of edges contained in $V_i$ is at most $m/4+(\sqrt{2m+1/4}-1/2)/8$.
There has been a large body of research for judicious partitioning problems (see, e.g., \cite{Alon2003, Bollobas2000, Bollobas2004, Fan2017, Has2012, Has2014, Hou2016, ji2018, Lee2013, Lee2014, MaJCTB, Ma2012, Ma2016, por1992, spink, Xu2011,  Xu2014}).
For a systematic treatment on related problems and results, we refer interested readers to the surveys \cite{Bollobas2002_1,Scott2005,WH2022}.

In this paper, we study partitioning problems for directed graphs (i.e., digraphs).
For a digraph $D$ and a bipartition $V(D)=V_1\cup V_2$,
let $e(V_1,V_2)$ denote the size of the {\it directed cut} $(V_1,V_2)$, i.e., the number of arcs in $D$ directed from $V_1$ to $V_2$.
It is easy to find a directed cut of size at least $m/4$ in any digraph with $m$ edges;
on the other hand, Alon, Bollob\'{a}s, Gy\'{a}rf\'{a}s, Lehel and Scott \cite{Alon2007} constructed digraphs whose maximum directed cut is $m/4 + O(m^{4/5})$.
A natural problem in the judicious setting would be to find a bipartition $V(D)=V_1\cup V_2$ that maximizes $\min\{e(V_1, V_2), e(V_2, V_1)\}$.
However, if the digraph $D$ is an oriented star where all arcs are oriented from its center in one direction,
then one of $e(V_1,V_2)$ and $e(V_2,V_1)$ will always be zero for any bipartition $V_1\cup V_2$.
To avoid this situation, Scott \cite{Scott2005} suggested the following problem by imposing a minimum degree condition.

\begin{prob}[Scott \cite{Scott2005}, see Problem 3.27]\label{scottprob}
Let $d$ be a positive integer. What is the largest constant $c_{d}$ such that every digraph $D$ with $m$ arcs and minimum outdegree $d$ admits a bipartition $V(D)=V_1\cup V_2$
such that
\[
\min\left\{e(V_1, V_2), e(V_2, V_1)\right\}\geq c_{d}\cdot m?
\]
\end{prob}

It is easy to obtain $c_1=0$.
Lee, Loh and Sudakov \cite{Lee2014} proved that $c_2=1/6+o(1)$ and $c_3=1/5+o(1)$.
For the general case, they made the following attractive conjecture.

\begin{conj}[Lee, Loh and Sudakov \cite{Lee2014}, see Conjecture 1.2]\label{conj1}
Let $d\geq 3$ be a positive integer. Every digraph $D$ with $m$ arcs and minimum outdegree at least $d+1$ admits a bipartition
$V(D)=V_1\cup V_2$ such that
\begin{align}\label{equ:bounds}
\min\left\{e(V_1,V_2),e(V_2, V_1)\right\}\geq \left(\frac{d}{2(2d+1)}+o(1)\right)m.
\end{align}
\end{conj}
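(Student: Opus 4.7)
The plan is to attack Conjecture \ref{conj1} by a local-swap strategy in the spirit of Lee--Loh--Sudakov \cite{Lee2014}. After a standard degree-reduction step (handling vertices of exceptionally large out-degree separately, at a cost of $o(m)$ arcs), consider a bipartition $V(D)=V_1\cup V_2$ that maximises $\min\{e(V_1,V_2),e(V_2,V_1)\}$ and, among such, maximises $e(V_1,V_2)+e(V_2,V_1)$. Without loss of generality $e(V_1,V_2)\le e(V_2,V_1)$; set $\delta=e(V_2,V_1)-e(V_1,V_2)\ge 0$ and suppose for contradiction that $e(V_1,V_2)<\bigl(\frac{d}{2(2d+1)}-\varepsilon\bigr)m$.

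For each vertex $v$, let $d_i^{+}(v)$ and $d_i^{-}(v)$ denote its numbers of out- and in-neighbours in $V_i$ respectively, so that $d_1^{+}(v)+d_2^{+}(v)\ge d+1$ by hypothesis. Moving $v$ across the partition changes $e(V_1,V_2)$ and $e(V_2,V_1)$ by explicit linear combinations of these four quantities, and our extremal choice forces, at every $v$, one of two linear inequalities to hold: either the swap does not increase the current minimum, or it drops the other direction to (or below) the minimum with slack $O(\delta)$. Summing these inequalities over all $v$ with appropriate nonnegative weights $\alpha,\beta$ on the two branches, and invoking both the minimum out-degree bound and the identities $\sum_{v\in V_i}d_j^{+}(v)=e(V_i,V_j)$, yields a global linear relation on the arc counts $e(V_i,V_j)$. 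For correctly chosen $\alpha,\beta$ the internal-arc coefficients cancel, leaving $e(V_1,V_2)\ge\frac{d}{2(2d+1)}\,m-O(\delta)$. It remains to show $\delta=o(m)$: otherwise, repeatedly swapping individual vertices would strictly reduce $\delta$ without decreasing the minimum, contradicting extremality of the chosen partition.

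The main obstacle is selecting the weights $\alpha,\beta$ so that the aggregated inequality produces exactly the conjectural constant $\frac{d}{2(2d+1)}$; these weights should arise as the dual solution of a vertex-local linear program whose extremal instance coincides with the equality case of the conjecture, and identifying that extremal configuration---plausibly a tournament-like gadget on roughly $2d+1$ vertices generalising the examples behind the Lee--Loh--Sudakov results for $d\in\{1,2\}$---will be the crucial combinatorial input. A secondary difficulty is that vertices falling into the ``second branch'' of the local-optimality disjunction contribute $O(\delta)$ slack, so the gap-reduction argument must drive $\delta$ to $o(m)$ without cumulative loss; this may in fact be cleanest using a smoothed potential such as $e(V_1,V_2)+e(V_2,V_1)+\lambda\min\{e(V_1,V_2),e(V_2,V_1)\}$ for a carefully chosen $\lambda$, rather than maximising the raw minimum directly.
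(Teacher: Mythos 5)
You set out to prove Conjecture~\ref{conj1}, but this is a statement the paper itself does not prove and which remains open for general $d$: the paper establishes the analogous bound only under the stronger hypothesis of minimum \emph{semidegree} at least $d$ (Theorem~\ref{thm1}), and explicitly remarks that it is unclear whether minimum outdegree alone suffices for large $d$. So there is no proof in the paper to match your sketch against; the question is whether your outline closes the conjecture, and it does not. You yourself identify the two decisive steps as unresolved: the choice of weights $\alpha,\beta$ that would make the internal-arc coefficients cancel and yield exactly the constant $\frac{d}{2(2d+1)}$, and the argument that the imbalance $\delta$ can be driven to $o(m)$. These are not finishing touches but the entire substance of the problem.

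Beyond that, two concrete points in the outline would fail as stated. First, the opening reduction---setting aside vertices of exceptionally large outdegree ``at a cost of $o(m)$ arcs''---is false in general: high-degree vertices can carry a constant fraction of all arcs (in the paper's proof of Theorem~\ref{thm1}, the huge vertices of Claim~\ref{cla1} satisfy $s(v)\ge\theta>\frac{m}{2d+1}$), so they must be partitioned carefully rather than discarded; only the arcs \emph{inside} the high-degree set, of which there are $O(\sqrt m)$, can be thrown away. Second, and more fundamentally, a vertex-local swap or LP-duality argument cannot detect the obstruction that makes $\frac{d}{2(2d+1)}$ the right constant: the extremal examples are disjoint unions of oriented copies of $K_{2d+1}$, and the loss relative to $m/4$ is governed by a \emph{global} count of tight (essential) components of the low-degree part, i.e., a parity/matching obstruction at the component level. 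All known partial results (Lee--Loh--Sudakov for small $d$, Liu--Yu for outdegree $4$, and the semidegree results culminating in Theorem~\ref{thm1}) proceed by splitting off the high-degree set $X$, choosing a partition of $X$ minimizing the gap $\theta$, and extending to $Y$ by a randomized partition built on a star decomposition together with a tight-component count and Azuma--Hoeffding concentration; the constant emerges from inequalities linking $\theta$, $\tau$, $e(X,Y)$ and $e(Y,X)$ (Claims~\ref{cla3}--\ref{cla5}), which a merely swap-optimal partition has no reason to satisfy. Finally, your gap-reduction step (repeatedly swapping to shrink $\delta$ without decreasing the minimum) is unjustified: a single swap changes both cut directions by vertex-dependent amounts, and there is no guarantee any vertex admits a swap that reduces the imbalance while preserving the minimum, nor that the $O(\delta)$ slack terms aggregate to $o(m)$.
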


If it is true, this would provide the optimal bound and thus show $c_{d+1}=\frac{d}{2(2d+1)}+o(1)$ for Problem~\ref{scottprob} (see the construction below).
Recently, Liu and Yu \cite{Liu2021} proved the next case of Conjecture~\ref{conj1}, i.e., for digraphs with minimum outdegree at least 4.
It is not clear if the minimum outdegree alone is sufficient for Conjecture~\ref{conj1} for large $d$ and
there have been researches for partitioning digraphs under additional conditions.
A natural additional assumption here is to impose some minimum indegree.
In this line, Hou, Ma, Yu and Zhang \cite{hou} proved Conjecture \ref{conj1} under the additional assumption that the minimum indegree is at least $d+1$.
In other words, the authors of \cite{hou} proved that \eqref{equ:bounds} holds for digraphs with minimum semidegree at least $d+1$.
In \cite{Hou2017}, Hou and Wu proposed the following question for a weaker minimum semidgree condition (also see Conjecture 4.6 in \cite{WH2022}).

\begin{que}[Hou and Wu \cite{Hou2017}]\label{prob}
Is it true that for every positive integer $d$, every digraph $D$ with $m$ arcs
and minimum semidegree $d$ admits a bipartition $V(D)=V_1\cup V_2$ such that
\[
\min\left\{e(V_1,V_2),e(V_2, V_1)\right\}\geq \left(\frac{d}{2(2d+1)}+o(1)\right)m?
\]
\end{que}

In the same paper \cite{Hou2017}, Hou and Wu proved that the above conclusion holds for all oriented graphs with minimum semidegree $d\geq 21$.
Their proofs are involved and work only for oriented graphs.
Later, Hou, Li and Wu \cite{Hou2020} gave a positive answer to Question~\ref{prob} for $d\leq 3$.

A \emph{bisection} of a (di)graph is a bipartition of its vertex set in which the number of vertices in the two parts differ by at most 1.
Finding bisections is of particular interest in partitioning problems (see, e.g., \cite{Fan2018, Hou2021, HWY2017, Lee2013, lin2021, Xu2014}) and it is usually more challenging.

Here we give an affirmative answer to Question~\ref{prob} in a stronger form, where we show that the desired bipartition can be further chosen as a bisection.

\begin{thm}\label{thm1}
Let $d$ be a positive integer. Every digraph $D$ with $m$ arcs and minimum semidegree at least $d$ admits a bisection
$V(D)=V_1\cup V_2$ such that
\[
\min\left\{e(V_1,V_2),e(V_2, V_1)\right\}\geq \left(\frac{d}{2(2d+1)}+o(1)\right)m.
\]
\end{thm}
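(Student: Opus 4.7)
My strategy is to split the argument into two phases: first establish the cut bound for a (possibly unbalanced) bipartition with minimum semidegree $d$, and then convert the bipartition into a bisection losing only $o(m)$ arcs in each direction.

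\emph{Phase 1 (a good bipartition).} Take $V_1\cup V_2$ maximizing a suitable objective, for instance the lexicographic pair $(e(V_1,V_2)+e(V_2,V_1),\, e(V_1,V_2)\cdot e(V_2,V_1))$. Maximality rules out every improving single-vertex move, so for each vertex $v$ we obtain a local inequality relating $d_1^{\pm}(v)$ and $d_2^{\pm}(v)$ to the global cut sizes. Coupling these with the minimum semidegree assumption $d^+(v),d^-(v)\ge d$ and aggregating---through a discharging or LP-duality style argument over all vertices---should yield the tight constant $\frac{d}{2(2d+1)}$. The new ingredient compared with the minimum semidegree $d+1$ bipartition result of \cite{hou} is the treatment of vertices whose semidegree equals $d$ exactly: these sit at the extremal case of the construction, and recovering the unit slack lost by weakening the hypothesis requires a secondary local move (such as a pair swap, or a carefully weighted probabilistic perturbation) to cover the tight constraints.

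\emph{Phase 2 (rebalancing).} If Phase 1 yields $V_1\cup V_2$ with $\bigl||V_1|-|V_2|\bigr|=2t$, move $t$ vertices greedily from the larger side to the smaller side. Moving $v\in V_1$ to $V_2$ changes $e(V_1,V_2)$ by $d_1^-(v)-d_2^+(v)$ and $e(V_2,V_1)$ by $d_1^+(v)-d_2^-(v)$; averaged over $v\in V_1$, each change telescopes to an $O(1)$ quantity per vertex, so an appropriate greedy or random choice makes the aggregate loss $o(m)$. If Phase 1 produces a very unbalanced partition, one can instead augment the Phase 1 objective with a balance penalty of the form $\lambda\bigl||V_1|-|V_2|\bigr|$ to keep $t$ controlled from the outset, and choose $\lambda$ small enough that the phase 1 cut bounds survive the penalty.

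\emph{Main obstacle.} The crux is the tightening in Phase 1 from semidegree $d+1$ to $d$. In the optimal construction, each vertex of minimum semidegree must contribute exactly the optimal fraction to each directed cut, leaving no slack for the routine single-move argument used in \cite{hou}. I expect the proof will need to combine several move types---single-vertex moves, pair swaps, and possibly a global weighting scheme---to produce a constraint system strong enough to force the ratio $\frac{d}{2(2d+1)}$. By comparison, the bisection constraint is primarily a technical balancing issue absorbed into the $o(m)$ term, though one must handle carefully the edge case when the Phase 1 optimum is heavily unbalanced, which is precisely what the balance-penalty refinement is designed to prevent.
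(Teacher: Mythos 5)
Your proposal is a plan rather than a proof, and the step on which everything hinges is not carried out. In Phase~1 you assert that aggregating the local inequalities coming from single-vertex moves (plus unspecified pair swaps or a ``global weighting scheme'') ``should yield'' the constant $\frac{d}{2(2d+1)}$, but you give no mechanism by which this constant emerges. The hard instances here are sparse digraphs ($m=\Theta(n)$, the only regime not already settled by the random-bisection lemma of Hou, Wu and Yan) in which the low-degree part splits into many small odd components with no antiparallel pairs, attached to a few vertices of huge and very skewed in/out degree. Local optimality of a cut-type objective relates cut sizes to vertex degrees, but it is blind to exactly the structure that governs the optimum: the parity/tight-component structure of the low-degree part and the imbalance $\sum_v |d^+(v)-d^-(v)|$ concentrated on the high-degree vertices. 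Every known proof in this line (Lee--Loh--Sudakov, Hou--Wu, Hou--Li--Wu, and the present paper) separates the large-degree set $X$, chooses a partition of $X$ minimizing the directional gap $\theta$, and extends it to the rest by a randomized star-decomposition argument in which the number $\tau$ of tight (here, essential) components of $D[Y]$ enters through a term $\frac{n-\tau}{8}$; the semidegree-$d$ hypothesis is then used precisely to bound $(2d+1)\tau$ by $|Y|+2\min\{e(X,Y),e(Y,X)\}$ and to estimate $\min\{e(X,Y),e(Y,X)\}$ in terms of $\theta$. Your sketch contains no substitute for these estimates, and ``discharging/LP-duality style argument'' is not an argument until the constraint system and its aggregation are written down and shown to close -- which you yourself flag as the ``main obstacle'' you expect to need further ideas for.

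Phase~2 has a genuine gap as well. In the sparse regime, if the Phase~1 optimum has imbalance $t=\Theta(n)$, then moving $t$ vertices at cost $O(1)$ each loses $\Theta(n)=\Theta(m)$ arcs, not $o(m)$; your fallback of adding a penalty $\lambda\,\bigl||V_1|-|V_2|\bigr|$ is not shown to be compatible with whatever Phase~1 bound you would have proved, and a single high-degree vertex moved across the cut can by itself destroy one of the two directed cut values. The paper sidesteps this entirely by proving the extension lemma directly in bisection form: Azuma--Hoeffding concentration makes the randomized partition nearly balanced, and only $o(n)$ vertices of bounded degree need to be relocated, costing $o(m)$. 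As it stands, neither phase of your outline is established, so the proposal does not constitute a proof.
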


The proof of Theorem \ref{thm1} at the basic level follows by an approach
launched by Bollob\'as and Scott \cite{BS1997,Bollobas2002_1} and further developed in the subsequent works such as \cite{Hou2017,Lee2013,Lee2014,Ma2012}.
It is to first partition the set $X$ of vertices with large degree by some optimization and then partition the set $Y$ of the remaining vertices using some randomize process
which is carefully designed based on the structures of the cut $(X,Y)$ as well as the subgraph induced by $Y$.
Then using probabilistic concentration inequalities, one can bound the deviations of random performances in terms of parameters associated with the aforementioned structures
(see Lemma~\ref{lem7} for implementing this step in our proof).
For partitioning digraphs, the difficulties are often to untangle the intricate relationships between these parameters.
The new ingredients at the core of our proof are some relatively neat estimations on the arcs between $X$ and $Y$ in each direction (we refer to Subsection~3.3 for more details).

Finally we remark that the bounds in Conjecture \ref{conj1} and Question~\ref{prob} are asymptotically optimal by considering the following digraphs given by Lee, Loh and Sudakov \cite{Lee2014}:
Take $k$ disjoint copies of $K_{2d+1}$ oriented along an Eulerian circuit,
and a single copy of $K_{2d+3}$ oriented in a similar manner which is disjoint from the copies of $K_{2d+1}$;
then fix a vertex $v_0$ of $K_{2d+3}$, and add arcs from all the vertices in the copies of $K_{2d+1}$ to $v_0$.
Clearly, the resulting digraph $D$ has minimum outdegree $d+1$ and minimum semidegree $d$, whose number of arcs equals
\begin{align*}
m= \; kd(2d+1)+(d+1)(2d+3)+k(2d+1)= \; k(d+1)(2d+1)+(d+1)(2d+3).
\end{align*}
For any bipartition of $K_{2d+1}$ oriented along an Eulerian circuit,
the number of arcs in any direction is at most $\frac{d(d+1)}{2}$.
Then for every bipartition $V(D)= V_1 \cup V_2$ (say $v_0\in V_1$), we have
\[
e(V_1,V_2)\leq  k \frac{d(d+1)}{2}+ \frac{(d+1)(d+2)}{2}=\frac{d}{2(2d+1)}m + \frac{(d+1)^2}{2d+1}.
\]
Hence we can conclude that
\[
\max_{V(D)= V_1 \cup V_2}\min\left\{e(V_1,V_2),e(V_2, V_1)\right\}\leq \left(\frac{d}{2(2d+1)}+o(1)\right)m.
\]

The rest of this paper is organized as follows.
In Section 2, we give notations and collect lemmas for the coming proof.
In Section 3, we complete the proof of Theorem \ref{thm1}.
Finally in Section 4, we offer some concluding remarks.

\medskip

\section{Preliminaries}

\subsection{Notations}
We use standard notations in graph theory.
All digraphs considered in this paper are finite with no loops and no multiple arcs (but a pair of arcs in opposite directions is allowed).
For a digraph $D$, we denote by $V(D)$ the {\it vertex set} of $D$ and by $A(D)$ the {\it arc set} of $D$.
Let $x, y$ be two vertices in $D$.
We write $xy$ or $x\to y$ for an arc in $D$ with head $x$ and tail $y$.
We write $N_{D}^{+}(x)=\{y : xy\in A(D)\}$ and $N_{D}^{-}(x)=\{y : yx\in A(D)\}$,
and call $d_{D}^{+}(x):=|N_{D}^{+}(x)|$ and $d_{D}^{-}(x):=|N_{D}^{-}(x)|$ as the {\it outdegree} and {\it indegree} of $x$ in $D$,
respectively.
The {\it degree} of $x$ in $D$ is defined as $d_D(x)= d^{+}_D(x)+d^-_D(x)$,
and the {\it semidegree} of $x$ in $D$ is $\min\{d^{+}_D(x),d^-_D(x)\}$.
We will omit the subscripts in the above definitions when this is not ambiguous.

The {\it maximum degree} $\Delta(D)$ of $D$ is $\max\{d_D(x) : x\in V(D)\}$.
The {\it minimum outdegree} of $D$ is $\delta^{+}(D)= \min\{d_{D}^{+}(x) : x\in V(D)\}$
and the {\it minimum indegree} of $D$ is $\delta^{-}(D)= \min\{d_{D}^{-}(x) : x\in V(D)\}$.
Finally, we define the {\it minimum semidegree} of $D$ to be $\delta^0(D)=\min\{\delta^+(D),\delta^-(D)\}$.

For any $X\subseteq  V(D)$, the subgraph of $D$ induced by $X$ is denoted by $D[X]$.
We write $e(X)$ to express the number of arcs contained in $D[X]$.
For disjoint sets $X, Y\subseteq V(D)$, by $(X,Y)$ we denote the set of all arcs in $D$ with head in $X$ and tail in $Y$.
Let $e(X,Y)$ be the number of arcs in $(X,Y)$.
Throughout the rest of the paper, we write $[k]$ for the set of integers $\{1, 2, \ldots, k\}$ where $k$ denotes a positive integer.

\subsection{Previous lemmas}
In this subsection, we collect some previous lemmas for the use of the coming proof.
First we state the following version of the Azuma-Hoeffding inequality.

\begin{lem}[Azuma \cite{Azuma1967}, Hoeffding \cite{Hoeffding1963}]\label{lem6}
Let $Z_1,\ldots,Z_n$ be independent random variables taking values
in $\{1,\ldots,k\}$, let $Z :=(Z_1,\ldots,Z_n)$, and let $f : \{1,\ldots,k\}^n \rightarrow \mathbb{N}$ such that
$|f(Y)-f(Y')|\le c_i$ for any
$Y,Y'\in \{1,\ldots,k\}^n$ which differ only in the i-th coordinate. Then for any $z>0$,
$$\mathbb{P}(f(Z)\ge\mathbb{E}(f(Z))+z)\; \le \; exp\left(\frac{-z^2}{2\sum_{i=1}^{n}c_i^2}\right),$$
$$\mathbb{P}(f(Z)\le\mathbb{E}(f(Z))-z)\; \le \; exp\left(\frac{-z^2}{2\sum_{i=1}^{n}c_i^2}\right).$$
\end{lem}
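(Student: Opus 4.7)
The plan is to derive this bounded-differences tail bound from the classical martingale approach of Azuma and Hoeffding, applied to the Doob martingale of $f(Z)$. Concretely, I would set $M_0:=\mathbb{E}[f(Z)]$ and, for $1\le i\le n$,
\[
M_i := \mathbb{E}\bigl[f(Z)\,\big|\,Z_1,\ldots,Z_i\bigr],
\]
so that $M_n=f(Z)$ and $(M_i)_{i=0}^n$ is a martingale with respect to the filtration $\mathcal{F}_i:=\sigma(Z_1,\ldots,Z_i)$. Writing $D_i:=M_i-M_{i-1}$, the deviation to be controlled equals $f(Z)-\mathbb{E}[f(Z)]=\sum_{i=1}^n D_i$, a sum of martingale differences.

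The key step is to obtain a conditional sub-Gaussian moment-generating function bound for each $D_i$. By independence of the $Z_j$'s, one can introduce an independent copy $Z_i'$ of $Z_i$ and, letting $Z^{(i)}:=(Z_1,\ldots,Z_{i-1},Z_i',Z_{i+1},\ldots,Z_n)$, rewrite $D_i = \mathbb{E}[f(Z)-f(Z^{(i)})\mid \mathcal{F}_i]$; since $Z$ and $Z^{(i)}$ differ only in the $i$-th coordinate, the bounded-differences hypothesis gives $|D_i|\le c_i$ almost surely. In particular $D_i$ has conditional mean zero given $\mathcal{F}_{i-1}$ and lies in a (random) interval of length at most $2c_i$, so Hoeffding's lemma---any zero-mean random variable $X$ supported in $[a,b]$ satisfies $\mathbb{E}[e^{tX}]\le\exp(t^2(b-a)^2/8)$ for every real $t$, proved by convex interpolation of $e^{tx}$ on $[a,b]$ combined with a Taylor estimate of the resulting cumulant---yields
\[
\mathbb{E}\bigl[e^{tD_i}\,\big|\,\mathcal{F}_{i-1}\bigr]\le \exp\!\left(\tfrac{t^2 c_i^2}{2}\right).
\]

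Chaining these conditional bounds through the tower property gives $\mathbb{E}[e^{t(f(Z)-\mathbb{E}f(Z))}]\le \exp(\tfrac{t^2}{2}\sum_{i=1}^n c_i^2)$ for every $t\in\mathbb{R}$. For the upper tail, the Chernoff--Markov step with $t>0$ yields
\[
\mathbb{P}\bigl(f(Z)\ge \mathbb{E}[f(Z)]+z\bigr)\le \exp\!\left(-tz+\tfrac{t^2}{2}\sum_{i=1}^n c_i^2\right),
\]
and optimizing at $t=z\big/\sum_{i=1}^n c_i^2$ collapses the right-hand side to exactly $\exp(-z^2/(2\sum_i c_i^2))$, the claimed upper-tail bound. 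The lower-tail estimate follows by applying the same argument to $-f$, which satisfies the bounded-differences hypothesis with the same constants $c_i$. The main technical ingredient is Hoeffding's lemma; once that MGF bound is in place, the remainder is a routine telescoping-plus-Chernoff computation that uses nothing about the $Z_i$'s beyond independence and the finiteness of their range $\{1,\ldots,k\}$.
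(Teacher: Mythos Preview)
Your argument is correct and is the standard Doob-martingale derivation of the bounded-differences inequality (often called McDiarmid's inequality in this formulation). Note, however, that the paper does not actually prove this lemma: it is quoted as a known result with citations to Azuma and Hoeffding, so there is no ``paper's own proof'' to compare against. Your write-up supplies exactly the classical proof one would find in those references or in McDiarmid's exposition.
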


A natural, basic idea for finding good cuts is to investigate cuts generated by random partitions.
If we place each vertex of a digraph $D$ to a side uniformly and independently,
then we get a random partition with the expected number of arcs in each direction being $e(D)/4$.
In the following lemma, Lee, Loh and Sudakov \cite{Lee2014} showed that
such a partition (with asymptotically optimal performances) can be realized under some additional restrictions.

\begin{lem}[Lee, Loh and Sudakov \cite{Lee2014}]\label{lem1}
Let $D$ be a digraph with $n$ vertices and $m$ arcs. For every $\varepsilon>0$,
if $m\ge 8n/\varepsilon^2$ or $\Delta(D) \le \varepsilon^2m/4$, then $D$ admits a bipartition $V(D)=V_1\cup V_2$ such that
$$\min\{e(V_1, V_2), e(V_2, V_1)\}\ge\frac{1}{4}m-\varepsilon m.$$
\end{lem}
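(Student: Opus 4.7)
The plan is to take a uniformly random bipartition of $V(D)$, invoke the Azuma--Hoeffding inequality (Lemma~\ref{lem6}), and then observe that the two alternative hypotheses are precisely calibrated so that the tail bound beats $1/2$ after a union bound.

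Concretely, I would list $V(D)=\{v_1,\ldots,v_n\}$ and let $Z_1,\ldots,Z_n\in\{1,2\}$ be independent uniform random variables, setting $V_j=\{v_i:Z_i=j\}$. For any arc of $D$ the probability that its head lies in $V_1$ and its tail lies in $V_2$ is $1/4$, so by linearity $\mathbb{E}[e(V_1,V_2)]=\mathbb{E}[e(V_2,V_1)]=m/4$. Changing the coordinate $Z_i$ only affects the crossing status of arcs incident to $v_i$, so the function $Z\mapsto e(V_1,V_2)$ is $c_i$-Lipschitz with $c_i\le d_D(v_i)$, and likewise for $e(V_2,V_1)$. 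Applying Lemma~\ref{lem6} with $z=\varepsilon m$ gives
\[
\mathbb{P}\!\left(e(V_1,V_2)\le \tfrac{m}{4}-\varepsilon m\right)\le \exp\!\left(-\frac{\varepsilon^2 m^2}{2\sum_{i=1}^n d_D(v_i)^2}\right),
\]
and an analogous bound for $e(V_2,V_1)$. So the proof reduces to checking that $\sum_i d_D(v_i)^2 \le \varepsilon^2 m^2/2$ under either hypothesis.

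In the regime $\Delta(D)\le \varepsilon^2 m/4$, I would use $\sum_i d_D(v_i)^2\le \Delta(D)\sum_i d_D(v_i)=2m\Delta(D)\le \varepsilon^2 m^2/2$, exploiting the handshake identity $\sum_i d_D(v_i)=2m$. In the regime $m\ge 8n/\varepsilon^2$, the trivial bound $d_D(v_i)\le 2(n-1)$ yields $\sum_i d_D(v_i)^2\le 2(n-1)\cdot 2m\le 4mn\le \varepsilon^2 m^2/2$. In both cases the exponent in the Azuma bound is at least $1$, so each one-sided tail probability is at most $1/e$. A union bound then gives $\mathbb{P}\!\left(\min\{e(V_1,V_2),e(V_2,V_1)\}\le m/4-\varepsilon m\right)\le 2/e<1$, so with positive probability the random bipartition satisfies the desired inequality, and the lemma follows.

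The proof is essentially a one-line application of Azuma--Hoeffding once the Lipschitz constants $c_i=d_D(v_i)$ are identified; there is no substantial obstacle, only the bookkeeping observation that the two alternative hypotheses are exactly the weakest uniform upper bounds on $\sum_i d_D(v_i)^2$ needed to push the concentration estimate past the union bound threshold.
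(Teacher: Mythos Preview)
Your argument is correct. Note, however, that the paper does not supply its own proof of Lemma~\ref{lem1}; it is quoted without proof in Subsection~2.2 as a previous result of Lee, Loh and Sudakov~\cite{Lee2014}. The random-partition-plus-Azuma--Hoeffding argument you give is exactly the standard one (and is the method in~\cite{Lee2014}): the two alternative hypotheses are calibrated precisely to force $\sum_i d_D(v_i)^2\le \varepsilon^2 m^2/2$, which makes each one-sided tail probability at most $1/e$ and allows the union bound to succeed.
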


This lemma was extended by Hou, Wu and Yan \cite{HWY2017} in the following bisection version.

\begin{lem}[Hou, Wu and Yan \cite{HWY2017}]\label{lem4}
Let $D$ be a digraph with $n$ vertices and $m$ arcs. For every $\varepsilon>0$, if $m\ge 16n/\varepsilon^2$ or $\Delta(D)\le$
$\varepsilon^2m/8$, then $D$ admits a bisection $V(D)=V_1\cup V_2$ such that
$$\min\{e(V_1,V_2),e(V_2,V_1)\}\ge\frac{1}{4}m-\varepsilon m.$$
\end{lem}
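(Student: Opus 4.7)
The plan is to mirror the proof of Lemma~\ref{lem1} (Lee--Loh--Sudakov) while additionally enforcing the bisection constraint via a rebalancing step. First I would unify the two hypotheses: if $m \geq 16n/\varepsilon^2$, then $n \leq \varepsilon^2 m/16$, and since $\Delta(D) \leq 2(n-1)$ always, this forces $\Delta \leq \varepsilon^2 m/8$. So throughout we may assume $\Delta \leq \varepsilon^2 m/8$. The same inequalities show that $n \geq 16/\varepsilon^2$ in the first case (using $m \leq n^2$), and one can reduce to $n \geq C/\varepsilon^2$ for a large absolute constant $C$ (digraphs with $n$ bounded in terms of $\varepsilon$ are handled directly).

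Next, generate a random bipartition by placing each vertex independently into $V_1$ or $V_2$ with probability $1/2$. Linearity gives $\mathbb{E}[e(V_i,V_j)] = m/4$. Applying Lemma~\ref{lem6} to each of $f_1 := e(V_1,V_2)$ and $f_2 := e(V_2,V_1)$, with Lipschitz constants $c_v = d(v)$ and $\sum_v c_v^2 \leq 2m\Delta$, yields
\[
\Pr\!\bigl[\,f_j \leq m/4 - (1-\eta)\varepsilon m\,\bigr] \leq \exp\!\bigl(-(1-\eta)^2\varepsilon^2 m/(4\Delta)\bigr) \leq \exp\!\bigl(-2(1-\eta)^2\bigr),
\]
using $\Delta \leq \varepsilon^2 m/8$ in the last step. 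For a small enough $\eta > 0$ (e.g.\ $\eta = 1/10$), this is strictly less than $1/4$. Simultaneously, $|V_1| \sim \mathrm{Bin}(n, 1/2)$ and Hoeffding gives $\Pr\!\bigl[\,\bigl| |V_1|-n/2 \bigr| > C_0\sqrt{n}\,\bigr] \leq 1/4$ for a suitable absolute $C_0$. A union bound then produces a bipartition satisfying both $f_j \geq m/4 - (1-\eta)\varepsilon m$ and $\bigl| |V_1|-n/2 \bigr| \leq C_0\sqrt{n}$.

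Finally, rebalance this bipartition into a bisection. Assuming WLOG $|V_1| \geq |V_2|$, move the $k := \lfloor(|V_1|-|V_2|)/2\rfloor \leq C_0\sqrt{n}$ vertices of smallest degree in $V_1$ to $V_2$. Since $\sum_{v \in V_1} d(v) \leq 2m$, these $k$ smallest degrees sum to at most $2km/|V_1| = O(km/n) = O(m/\sqrt{n})$. Each move alters $f_1$ and $f_2$ by at most the moved vertex's degree, so the total loss per cut is $O(m/\sqrt{n}) \leq \eta\varepsilon m$ whenever $n \geq C/\varepsilon^2$. Combining the initial cut bound $m/4 - (1-\eta)\varepsilon m$ with the loss $\leq \eta\varepsilon m$ gives a bisection with $\min\{e(V_1,V_2), e(V_2,V_1)\} \geq m/4 - \varepsilon m$.

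The main obstacle is the rebalancing step: the naive bound $k\cdot \Delta$ on the loss per cut is too large when $\Delta$ approaches the threshold $\varepsilon^2 m/8$, so the crucial idea is to move the $k$ smallest-degree vertices in $V_1$, which via the identity $\sum_v d(v) = 2m$ reduces the loss to $O(km/n) = O(m/\sqrt{n})$; this is $O(\varepsilon m)$ precisely when $n \gtrsim 1/\varepsilon^2$, as secured in the initial reduction.
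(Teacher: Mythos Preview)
The paper does not prove this lemma; it merely quotes it from Hou, Wu and Yan \cite{HWY2017}. So there is no in-paper proof to compare against, and I can only assess your argument on its own merits.

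Your overall plan is the standard one, and the reduction of the first hypothesis to the second is correct. There is, however, a genuine quantitative gap in the rebalancing step. From $\Delta\le \varepsilon^2 m/8$ together with $2m\le n\Delta$ you only get $n\ge 16/\varepsilon^2$; you cannot assume $n\ge C/\varepsilon^2$ for an arbitrarily large absolute constant $C$, and the claim that ``digraphs with $n$ bounded in terms of $\varepsilon$ are handled directly'' is unsupported. With $n$ of order $16/\varepsilon^2$, the rebalancing loss $O(m/\sqrt{n})$ is $\Theta(\varepsilon m)$ with a constant exceeding $1$: indeed, moving $k\le C_0\sqrt{n}$ vertices of smallest degree in $V_1$ costs at most $8C_0 m/\sqrt{n}\le 2C_0\varepsilon m$, whereas the Azuma step with $z=(1-\eta)\varepsilon m$ already forces $\eta$ to be small (you need $2\exp(-2(1-\eta)^2)<1$, i.e.\ $\eta<1-\sqrt{(\ln 2)/2}\approx 0.41$), so the budget $\eta\varepsilon m$ cannot absorb a loss of size $\approx 2C_0\varepsilon m$. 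The constants simply do not close with this i.i.d.-then-rebalance scheme under the stated hypothesis.

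A clean fix that avoids rebalancing entirely: fix an arbitrary pairing of the vertices (add a dummy isolated vertex if $n$ is odd) and, independently for each pair, assign one vertex to $V_1$ and the other to $V_2$ by a fair coin. This is always a bisection, and for every arc $uv$ one has $\Pr[u\in V_1,\,v\in V_2]\ge 1/4$, so $\mathbb{E}[e(V_i,V_j)]\ge m/4$. Each coin flip affects $e(V_1,V_2)$ by at most $d(u)+d(v)$ for its pair $\{u,v\}$, whence $\sum_i c_i^2\le 2\sum_v d(v)^2\le 4m\Delta$, and Lemma~\ref{lem6} gives
\[
\Pr\bigl[e(V_i,V_j)\le m/4-\varepsilon m\bigr]\le \exp\!\left(-\frac{\varepsilon^2 m^2}{8m\Delta}\right)\le \exp(-1)
\]
using $\Delta\le \varepsilon^2 m/8$. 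Since $2e^{-1}<1$, a union bound over the two directions yields the desired bisection.
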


We now define an important structural concept in this study introduced by Lee, Loh and Sudakov \cite{Lee2013}.
Let $T$ be a connected component of a given (undirected) graph $G$. We say that $T$ is a {\it tight} component of $G$ if it satisfies the following properties:
\begin{itemize}\setlength{\itemsep}{-1.5pt}
\item [(i)] for every vertex $v\in V (T)$, $T-v$ contains a perfect matching, and
\item [(ii)]  for every vertex $v\in V (T)$ and every perfect matching $M$ of $T-v$, no edge in $M$
has exactly one end adjacent to $v$.
\end{itemize}
Clearly a tight component must have an odd number of vertices.
The {\it underlying graph} $G_D$ of a digraph $D$ is a simple graph obtained from $D$ by ignoring arc orientations and
removing redundant multi-edges.
We say a component of $D$ is {\bf tight} if the corresponding component of $G_D$ is tight,
and {\bf essential} if it is tight and does not contain two arcs in opposite directions.

Let $\{e_1,\ldots, e_s\}$ be a maximum matching in a graph $G$, and let $W$ be the set of vertices not in the matching.
Let $v\in V(e_i)$ and $w\in W$.
With respect to this fixed matching, $v$ is called a {\it free} neighbor of $w$
if $w$ is adjacent to $v$ but not adjacent to the other endpoint of $e_i$.
We call $w\in W$ a {\it free} vertex if it has at least one free neighbor in the matching, and a {\it non-free} vertex otherwise.
In \cite{Lee2014} Lee, Loh and Sudakov showed that there is a bijective correspondence between non-free vertices and tight components under certain conditions.

\begin{lem}[Lee, Loh and Sudakov \cite{Lee2014}]\label{lem8}
Let $e_1,\ldots, e_s$ be the edges of a maximum matching in an undirected graph $G$, and let $W$ be the set of vertices not in the matching.
Further assume that among all matchings of maximum size, we have chosen one which maximizes the number of free vertices in $W$.
Then, every tight component contains a distinct non-free vertex of $W$, and all non-free vertices of $W$ are covered in this way (i.e., there is a bijective correspondence).
\end{lem}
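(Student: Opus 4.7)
The plan is to prove the bijection in two directions: (I) every tight component of $G$ contains exactly one unmatched vertex of $W$, and this vertex is non-free; (II) every non-free vertex $w \in W$ lies in a tight component of $G$.

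\textbf{Direction I.} Let $T$ be a tight component. Since $T$ is factor-critical by property~(i), $|T|$ is odd, and since $T$ is a connected component of $G$ the restriction $M\cap E(T)$ is a maximum matching of $T$ missing exactly one vertex $w$; so $W\cap T=\{w\}$. Suppose toward contradiction that $w$ is free via a matching edge $e_i=uv$ with $v\in N(w)$ but $u\notin N(w)$. Both $u,v\in T$ because $T$ is a connected component. The swap $M':=(M\setminus\{uv\})\cup\{wv\}$ is another maximum matching, and $M'\cap E(T)$ is a perfect matching of $T-u$ containing the edge $wv$; this edge has $v$ adjacent to $u$ but $w$ not adjacent to $u$, contradicting property~(ii) at~$u$. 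Thus each tight component contributes a distinct non-free vertex of $W$.

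\textbf{Direction II.} Let $w\in W$ be non-free and let $T$ be the connected component of $w$ in $G$. I will verify properties~(i) and~(ii) for $T$ in three steps: (a) $|W\cap T|=1$, so that $M\cap E(T)$ is a perfect matching of $T-w$; (b) every other $v\in T$ admits a perfect matching of $T-v$; (c) property~(ii) holds. Step~(c) is the cleanest and showcases the key use of extremality: suppose for contradiction that some $v\in T$ admits a perfect matching $M^*$ of $T-v$ containing an edge $xy$ with $x$ adjacent to $v$ but $y$ not adjacent to $v$. Set
\[
\widetilde{M}:=(M\setminus E(T))\cup M^*.
\]
By step~(a), $|M\cap E(T)|=(|T|-1)/2=|M^*|$, so $\widetilde{M}$ is a maximum matching with unmatched set $(W\setminus\{w\})\cup\{v\}$. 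Crucially, $T$ is a connected component of $G$, so every vertex outside $T$ has no neighbor in $T$; hence the freeness status of each $r\in W\setminus\{w\}$ is determined by the matching edges of $M$ outside $T$, which are identical in $\widetilde{M}$, and so is unchanged. The new unmatched $v$ is free in $\widetilde{M}$ (witnessed by $xy$), while the removed $w$ was non-free in $M$; therefore $\widetilde{M}$ has strictly more free vertices than $M$, contradicting our extremal choice of $M$.

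For step~(a), assume $w'\in W\cap T$ with $w'\neq w$ and take a shortest path in $T$ between them. For the vertex $v_1$ of this path adjacent to $w$, the non-freeness of $w$ gives $v_1'\in N(w)$ where $v_1'$ is the $M$-partner of $v_1$, so the swap $M\mapsto (M\setminus\{v_1v_1'\})\cup\{wv_1'\}$ yields another maximum matching whose unmatched set now contains $v_1$ and $w'$ joined by a strictly shorter path. Iterated shortening---carefully combined with the extremal choice of $M$, since non-freeness is not automatically inherited by the new unmatched vertex---terminates in an augmenting edge, contradicting the maximality of $M$. For step~(b), a parallel alternating-path analysis shows that every $v\in T$ is reachable from $w$ by an $M$-alternating path of even length, from which a symmetric-difference swap delivers a perfect matching of $T-v$; one may equivalently invoke the Gallai--Edmonds decomposition, in which non-freeness forces $N(w)\subseteq D_w$ and extremality forces $T$ to coincide with the single factor-critical $D$-component containing $w$.

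\textbf{Main obstacle.} The delicate point is step~(a): the non-freeness of $w$ is strictly local, constraining only matching edges with an endpoint in $N(w)$, but we must lift it to the global conclusion that no other $W$-vertex lies anywhere in $T$. Naively iterating the first swap is subtle because non-freeness is not preserved under an arbitrary swap, and one must blend the alternating-path shortening with the extremal choice of $M$ to keep the bookkeeping tractable. Once~(a) is in hand, step~(b) proceeds analogously and step~(c) is the clean extremality argument above.
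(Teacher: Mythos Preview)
The paper does not prove this lemma; it is quoted from Lee, Loh and Sudakov \cite{Lee2014} and used as a black box in the proof of Lemma~\ref{lem7}. There is therefore no paper proof to compare your attempt against.

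Evaluating your proposal on its own: Direction~I is correct and clean, and step~(c) of Direction~II is a valid use of the extremal choice of $M$ (given (a)). The genuine gap is in steps~(a) and~(b) of Direction~II, which you only sketch and which you yourself label the ``main obstacle.'' In your shortening argument for~(a), after the swap $(M\setminus\{v_1v_1'\})\cup\{wv_1'\}$ the newly exposed vertex $v_1$ need not be non-free, so the iteration cannot simply continue; and if $v_1$ \emph{is} free in the new matching, you still cannot invoke extremality directly, because vertices of $W\setminus\{w\}$ that were free only via the edge $v_1v_1'$ may lose their freeness under the swap, so the free-count could drop rather than rise. Your Gallai--Edmonds remark points in the right direction, but the assertion that ``extremality forces $T$ to coincide with the single factor-critical $D$-component containing $w$'' is precisely the nontrivial step and is left unjustified: one must actually argue that if some vertex of that $D$-component had a neighbor in the Gallai--Edmonds set $A$, then the matching could be rerouted to produce a maximum matching with strictly more free unmatched vertices. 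As written, Direction~II is an outline with a real hole at~(a)--(b), not a complete proof.
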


Using this lemma, the authors of \cite{Lee2014} (see its Lemma 3.2) managed to decompose an undirected graph into induced stars plus an independent set.
We need the following version of this decomposition result which can be easily deduced from the original proof of \cite{Lee2014}.\footnote{In the original statement of Lemma 3.2 in \cite{Lee2014}, $G$ is a graph with $n$ vertices and $m\leq Cn$ edges and $A$ denotes the set of vertices with degree at least $2C/\varepsilon$, where $\varepsilon$ and $C$ are arbitrary positive reals. In particular $|A|\leq \varepsilon n$.}

\begin{lem}[Lee, Loh and Sudakov \cite{Lee2014}, Lemma 3.2]\label{lem5}
Let $G$ be an undirected graph with $\tau$ tight components.
Let $A$ be a subset of $V(G)$.
Let $e_1,\ldots, e_s$ be the edges of a maximum matching in $G$ such that among all matchings of maximum size,
we have chosen one which maximizes the number of free vertices.
Then there exists
a partition $V(G)=T_1\cup T_2\cup \cdots\cup T_s\cup U$ such that
\begin{itemize}\setlength{\itemsep}{-1.5pt}
       \item [$(i)$]  each $T_i$ induces a star containing the edge $e_i$ in the maximum matching,
       \item [$(ii)$] at most one leaf vertex in each $T_i$ is contained in $A$, and
       \item [$(iii)$] $U$ is an independent set of order $|U|\le \tau+|A|$.
   \end{itemize}
\end{lem}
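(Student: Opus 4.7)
The plan is to build the decomposition directly from the given maximum matching. Write the matching as $M=\{e_1,\dots,e_s\}$ with $e_i=u_iv_i$, and let $W$ be the set of vertices not covered by $M$; by maximality of $M$, the set $W$ is independent. Applying Lemma~\ref{lem8} to $M$ gives a bijection between the tight components of $G$ and the non-free vertices of $W$, so there are exactly $\tau$ non-free vertices. I would immediately place all $\tau$ non-free vertices, together with every free vertex of $W$ that happens to lie in $A$, into $U$. Since $U\subseteq W$, the set $U$ is automatically independent, and $|U|\le \tau+|A\cap W|\le \tau+|A|$, which takes care of condition~$(iii)$.

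The key step, which I expect to be the main obstacle, is to choose a consistent ``center'' for each star so that every remaining vertex (a free vertex of $W$ not in $A$) can actually be placed. The crucial observation is that for each matching edge $e_i$, all free vertices of $W$ having a free neighbor in $e_i$ must share the \emph{same} free neighbor. Indeed, if $w,w'\in W$ had free neighbors $v_i$ and $u_i$ respectively in $e_i$, then $w\sim v_i$, $w\not\sim u_i$, $w'\sim u_i$, $w'\not\sim v_i$, and $w\ne w'$; replacing $e_i$ with the two edges $wv_i$ and $u_iw'$ would then produce a matching of size $s+1$, contradicting the maximality of $M$. This lets me assign each $e_i$ a canonical center $c_i$, namely the common free neighbor if one exists and either endpoint otherwise; write $\ell_i$ for the other endpoint of $e_i$.

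With the centers fixed the rest is routine. I would initialize $T_i=\{c_i,\ell_i\}$ and then, for each free vertex $w\in W\setminus A$, pick any $e_i$ in which $w$ has a free neighbor---this neighbor is necessarily $c_i$, by the observation---and add $w$ to $T_i$. Each added $w$ is adjacent to $c_i$ but not to $\ell_i$ by the definition of a free neighbor, and has no edges to previously added leaves because $W$ is independent, so $T_i$ induces a star centered at $c_i$ that contains $e_i$, handling $(i)$. Since no added leaf lies in $A$, the only possible $A$-leaf of $T_i$ is $\ell_i$ itself, giving at most one and establishing $(ii)$. A final sanity check confirms that every vertex of $V(G)$ lands in exactly one of the $T_i$ or $U$, so we really have a partition. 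Once the common-free-neighbor swap argument is in hand, everything else is pure bookkeeping.
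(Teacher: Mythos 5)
Your proof is correct and follows essentially the same route as the original argument of Lee, Loh and Sudakov, which this paper cites rather than reproduces: place the non-free vertices (counted by the bijection of Lemma~\ref{lem8}) and the free vertices lying in $A$ into $U$, and use the augmenting-swap argument to show all free vertices attached to a given matching edge share a common free neighbor, which fixes the star centers. No gaps.
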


The following lemma of \cite{Lee2014} and its variances serve as a guiding tool in the recent development of digraph partitioning problems (see, i.e., \cite{Lee2014,HWY2017,Hou2020}).

\begin{lem}[Lee, Loh and Sudakov \cite{Lee2014}]\label{lem3}
For any positive constants $C$ and $\varepsilon$, there exist $\gamma$, $n_{0}>0$ for which the following holds.
Let $D$ be a digraph with $n\geq n_{0}$ vertices and at most $Cn$ arcs. Suppose $X\subseteq V(D)$ is a set
of at most $\gamma n$ vertices and $X_1,X_2$ is a partition of $X$.
Let $Y=V(D)\setminus X$ and let $\tau$ be the number of components with odd order in $D[Y]$.
If every vertex in $Y$ has degree at most
$\gamma n$ in $D$, then there is a bipartition
$V(D)=V_{1}\cup V_{2}$ with $X_{i}\subseteq V_{i}$ for $i=1,2$ such that
$$e(V_{1},V_{2})\geq e(X_{1},X_{2})+\frac{e(X_{1},Y)+e(Y,X_{2})}{2}+\frac{e(Y)}{4}+\frac{n-\tau}{8}-\varepsilon n,$$
$$e(V_{2},V_{1})\geq e(X_{2},X_{1})+\frac{e(X_{2},Y)+e(Y,X_{1})}{2}+\frac{e(Y)}{4}+\frac{n-\tau}{8}-\varepsilon n.$$
\end{lem}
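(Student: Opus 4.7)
The plan is to follow Lee, Loh and Sudakov: extend the given partition of $X$ to all of $V(D)$ by a correlated random placement of $Y$ that is guided by a star decomposition of $G$, the underlying graph of $D[Y]$. First I would apply Lemma~\ref{lem5} to $G$ with $A=\emptyset$, obtaining a partition $Y = T_1\cup\cdots\cup T_s\cup U$ in which each $T_i$ is an induced star with some center $c_i$ and $U$ is an independent set of size at most the number of tight components of $G$. Because every tight component has odd order, this count is at most $\tau$, so $|U|\le\tau$; moreover $|T_i|\ge 2$ yields $s\le(|Y|-|U|)/2$, and hence $\sum_i(|T_i|-1) = |Y|-|U|-s \ge (|Y|-|U|)/2 \ge (n-\tau)/2 - \gamma n/2$.

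With this decomposition in hand I would define the random placement: keep $X_i\subseteq V_i$ fixed; for each star $T_i$ flip an independent fair coin $\xi_i$ that places $c_i$ into $V_1$ and all leaves of $T_i$ into $V_2$ if $\xi_i=0$, and the reverse if $\xi_i=1$; for each $u\in U$ flip an independent fair coin $\eta_u$ to place $u$ uniformly in $V_1$ or $V_2$. Since every $v\in Y$ then has marginal probability $1/2$ of being on each side and every arc inside a star is always cut, linearity of expectation gives
\[
\mathbb{E}[e(V_1,V_2)] = e(X_1,X_2) + \tfrac{1}{2}\bigl(e(X_1,Y)+e(Y,X_2)\bigr) + \tfrac{1}{4}e(Y) + \tfrac{1}{4}\sum_i e(T_i),
\]
together with the symmetric formula for $\mathbb{E}[e(V_2,V_1)]$. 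Since $\sum_i e(T_i)\ge\sum_i(|T_i|-1)\ge (n-\tau)/2-\gamma n/2$, each expectation already meets the lemma's target up to an $O(\gamma n)$ slack that is absorbed into $\varepsilon n$ by choosing $\gamma$ sufficiently small.

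The hard part will be the concentration step. I would apply Lemma~\ref{lem6} to $f=e(V_1,V_2)$ and $g=e(V_2,V_1)$, viewed as functions of the independent bits $\{\xi_i\}\cup\{\eta_u\}$. Flipping $\eta_u$ only affects arcs incident to $u$, so its Lipschitz constant is at most $d(u)\le\gamma n$. Flipping $\xi_i$, however, swaps the sides of all $|T_i|$ vertices at once and can change $f$ by up to $m_i$, the number of arcs incident to $T_i$; since $T_i$ is induced and every $Y$-vertex has degree at most $\gamma n$, the underlying star has $|T_i|\le\gamma n+1$. The key technical challenge is to bound $\sum_k c_k^2 = O(C\gamma n^2)$ using the global budget $\sum_i m_i\le 2m\le 2Cn$, playing the few stars with large $m_i$ against the many with small $m_i$ (for instance, by handling stars above a size threshold separately). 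Once this bound is secured, Lemma~\ref{lem6} with $z=\varepsilon n/2$ yields $\Pr[|f-\mathbb{E}[f]|\ge\varepsilon n/2]\le 1/4$ for $\gamma$ small enough in terms of $C,\varepsilon$, and likewise for $g$; a union bound then produces a bipartition $V(D)=V_1\cup V_2$ with $X_i\subseteq V_i$ satisfying both required inequalities.
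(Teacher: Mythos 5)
Your overall architecture (star decomposition of $G[Y]$ via Lemma~\ref{lem5}, block-random placement of stars plus independent placement of $U$, expectation computation, then Azuma--Hoeffding) is exactly the route of \cite{Lee2014} and of the paper's own strengthening (Lemma~\ref{lem7}), and your expectation calculation is fine. But there is a genuine gap at precisely the step you defer, and your choice $A=\emptyset$ is what creates it. The needed bound $\sum_k c_k^2 = O_{C,\varepsilon}(\gamma n^2)$ requires $\max_i c_i = O_{C,\varepsilon}(\gamma n)$ in addition to the global budget $\sum_i c_i \le 2Cn$; with $A=\emptyset$ the decomposition gives no control on the degrees of the \emph{leaves} of a star, so a single star (center of degree up to $\gamma n$, with up to $\gamma n$ leaves whose degrees need only be below $\gamma n$) can be incident to $\Theta(Cn)$ arcs, independently of how small $\gamma$ is. Then $\sum_k c_k^2 = \Theta(C^2 n^2)$ and Lemma~\ref{lem6} with $z=\varepsilon n/2$ only yields a failure probability of the form $\exp\bigl(-\Theta(\varepsilon^2/C^2)\bigr)$, which is not below $1/2$, so the union bound over the two cut quantities collapses. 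Your proposed fix (``handling stars above a size threshold separately'') does not address this: the obstruction is the total degree of a star's leaves, not the number of large stars (there are at most $O(C/\gamma)$ of them, but together they can be incident to a constant fraction of all $m\le Cn$ arcs, which cannot be discarded or fixed deterministically without losing far more than $\varepsilon n$).

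The actual proof resolves this exactly where you dropped it: apply Lemma~\ref{lem5} with $A=\{v: d_G(v)\ge 2C/\varepsilon\}$ (so $|A|\le \varepsilon n$), and use property (ii) that each star has at most one leaf in $A$. Then the arcs incident to $T_i$ number at most $2\gamma n + \tfrac{2C}{\varepsilon}|T_i| = O_{C,\varepsilon}(\gamma n)$, giving $\sum_k c_k^2 \le \bigl(\max_k c_k\bigr)\bigl(\sum_k c_k\bigr) = O_{C,\varepsilon}(\gamma n^2)$ and a failure probability $\exp\bigl(-\Theta(\varepsilon^3/(C^2\gamma))\bigr)$, which is as small as desired for $\gamma\ll \varepsilon^3/C^2$. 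The only price is $|U|\le \tau+|A|\le \tau+\varepsilon n$, which changes your expectation lower bound by an extra $\varepsilon n/8$ and is absorbed into the error term. So you should not take $A=\emptyset$; with the high-(constant-)degree set $A$ reinstated, your argument goes through and coincides with the paper's.
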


We mention that Hou, Wu and Yan \cite{HWY2017} extended this lemma by replacing $\tau$ with the number of tight components in $D[Y]$ and requiring the bipartition $V(D)=V_1\cup V_2$ to be a bisection,
and more recently, Hou, Li and Wu \cite{Hou2020} improved it by showing $\tau$ can be chosen as the number of essential components in $D[Y]$.
To show the desired bisection of Theorem \ref{thm1}, we will need a further strengthening of Lemma \ref{lem3} for which we prove in the following subsection.

\subsection{A strengthening of Lemma \ref{lem3}}
In this subsection, using probabilistic arguments we prove Lemma~\ref{lem7}, which generalizes Lemma \ref{lem3} as well as its variances mentioned above.
This lemma will allow us to extend a pre-optimized partition of the set of vertices with large degree to a `good' bisection of the entire digraph.
We like to point out that it is crucial for the proof in Section~3 to use $\tau$ as the number of essential components.

Since the proof is similar to that of Lemma 3.1 in \cite{Lee2014} and Lemma 3.5 in \cite{HWY2017},
we shall only give detailed explanations for steps which reflect the difference and sketch the rest of them.

\begin{lem}\label{lem7}
For any positive constants $C$ and $\varepsilon$, there exist $\gamma$, $n_{0}>0$ for which the following holds.
Let $D$ be a digraph with $n\geq n_{0}$ vertices and at most $Cn$ arcs. Suppose $X\subseteq V(D)$ is a set
of at most $\gamma n$ vertices and $X_1,X_2$ is a partition of $X$.
Let $Y=V(D)\setminus X$ and let $\tau$ be the number of {\bf essential} components in $D[Y]$.
If every vertex in $Y$ has degree at most
$\gamma n$ in $D$, then there is a bisection
$V(D)=V_{1}\cup V_{2}$ with $X_{i}\subseteq V_{i}$ for $i=1,2$ such that
$$e(V_{1},V_{2})\geq e(X_{1},X_{2})+\frac{e(X_{1},Y)+e(Y,X_{2})}{2}+\frac{e(Y)}{4}+\frac{n-\tau}{8}-\varepsilon n,$$
$$e(V_{2},V_{1})\geq e(X_{2},X_{1})+\frac{e(X_{2},Y)+e(Y,X_{1})}{2}+\frac{e(Y)}{4}+\frac{n-\tau}{8}-\varepsilon n.$$
\end{lem}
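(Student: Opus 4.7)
My plan is to follow the template of the proofs of Lemma~3.1 in \cite{Lee2014} and Lemma~3.5 in \cite{HWY2017}, layering on top the essential-component refinement introduced by Hou, Li and Wu \cite{Hou2020}. The skeleton is to decompose $Y$ into stars plus a residual independent set via Lemma~\ref{lem5}, assign each star a random ``flip'' and each residual vertex a random side, and invoke Azuma--Hoeffding (Lemma~\ref{lem6}) for concentration. The new feature compared with \cite{HWY2017} is that $\tau$ counts \emph{essential} components, so most of my effort will go into showing where the corresponding gain enters the expectation calculation.

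\emph{Setup and decomposition.} Fix $\gamma$ small and $n_0$ large, both to be tuned at the end. Let $G_Y$ be the underlying simple graph of $D[Y]$, and let $A\subseteq Y$ collect vertices of $D$-degree exceeding a constant threshold, chosen so that the degree-sum bound forces $|A|=o(n)$. Pick a maximum matching $\{e_1,\ldots,e_s\}$ of $G_Y$ which, among matchings of maximum size, also maximizes the number of free vertices; Lemma~\ref{lem5} then supplies a partition $Y=T_1\cup\cdots\cup T_s\cup U$ with each $T_i$ a star and $|U|\leq \tau'+|A|$, where $\tau'$ counts tight components of $G_Y$. I enforce the bisection constraint exactly as in \cite{HWY2017}: pair up stars (and $U$-vertices) of equal sizes and force their random signs to be opposite, which collapses the expected imbalance to $O(1)$ while costing only $o(n)$ in the expected cuts.

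\emph{Expectation and the essential-component gain.} A direct calculation yields
\[
\mathbb{E}[e(V_1,V_2)] = e(X_1,X_2)+\tfrac{1}{2}\bigl(e(X_1,Y)+e(Y,X_2)\bigr)+\tfrac{1}{4}e(Y)+\tfrac{1}{4}\sum_i e_i,
\]
with $e_i:=|A(D[T_i])|$, and symmetrically for $e(V_2,V_1)$. The baseline bound $e_i\geq |T_i|-1$ together with $|U|\leq \tau'+|A|$ already produces $\tfrac{1}{4}\sum_i e_i\geq \tfrac{1}{8}(n-\tau')-o(n)$, which recovers the tight-component conclusion of \cite{HWY2017}. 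To upgrade to \emph{essential}, I use that every tight-but-not-essential component $C$ of $G_Y$ contains a pair of anti-parallel arcs $\{uv,vu\}$: I will choose the decomposition so that, inside each such $C$, the underlying edge $uv$ lies inside some star $T_i$ (either as the matching edge $e_i$ or as a non-matching centre--leaf edge of $T_i$). Either placement contributes both arcs of the anti-parallel pair to $e_i$, adding $+1$ to $e_i$ beyond the baseline $|T_i|-1$. Summing over $\tau'-\tau$ such components converts the $(n-\tau')/8$ bound into the desired $(n-\tau)/8$ bound, up to $o(n)$.

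\emph{Concentration and the main obstacle.} Since every $y\in Y$ has $d_D(y)\leq \gamma n$, a single sign-flip perturbs each of $e(V_1,V_2)$, $e(V_2,V_1)$, and $||V_1|-|V_2||$ by $O(\gamma n)$; applying Lemma~\ref{lem6} to each of these quantities and taking a union bound with $\gamma\ll\varepsilon$ delivers a bisection satisfying both required inequalities. The principal obstacle is the matching-choice step inside the essential-component gain: I must verify that the maximum-matching / maximum-free-vertex optimization demanded by Lemma~\ref{lem5} is compatible with \emph{also} prescribing, in each tight-but-not-essential component, a star containing a chosen anti-parallel pair's underlying edge. I expect this to reduce to a careful alternating-path argument inside each tight component that exploits the rigid bijection afforded by Lemma~\ref{lem8}; once it is in place, the remaining steps are the routine bookkeeping already developed in \cite{Lee2014, HWY2017, Hou2020}.
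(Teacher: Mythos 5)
Your skeleton (star decomposition via Lemma~\ref{lem5}, random flips of stars, Azuma--Hoeffding via Lemma~\ref{lem6}, then repairing the balance) is the same as the paper's, but the one step that distinguishes this lemma from Lemma~3.5 of \cite{HWY2017} --- forcing the antiparallel pair of each tight-but-not-essential component into a single star --- is exactly the step you leave open, and half of the mechanism you propose for it is not available. Lemma~\ref{lem5} gives no control over which non-matching edges appear as centre--leaf edges of the stars $T_i$; its only structural guarantee is that each $T_i$ induces a star containing the matching edge $e_i$. So the alternative ``either as the matching edge or as a non-matching centre--leaf edge of $T_i$'' cannot be prescribed: the only placement you can actually enforce is to make an underlying edge of the antiparallel pair a \emph{matching} edge. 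And the compatibility of that requirement with the maximum-matching/maximum-free-vertex optimization is precisely what you defer (``I expect this to reduce to a careful alternating-path argument''), i.e.\ the genuinely new content of the lemma is missing from the proposal.

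The paper closes this gap with a short direct argument that needs no alternating paths. If $B$ is a tight component of $G[Y]$ on $2k+1$ vertices whose lift to $D$ contains $u\to v$ and $v\to u$, then every maximum matching of $G$ uses exactly $k$ edges inside $B$, and for \emph{any} choice of $k$-edge matching of $B$ the uncovered vertex of $B$ is non-free (this is tightness property (ii)); hence one may substitute any $k$-edge matching of $B$ without affecting either the matching size or the number of free vertices required by Lemma~\ref{lem5}. A $k$-edge matching of $B$ containing the edge $uv$ exists: take a perfect matching $M$ of $B-v$ and replace the edge of $M$ at $u$ by $uv$. To realize this simultaneously in all such components, the paper adds a secondary optimization to the matching choice: among matchings of maximum size that maximize the number of free vertices, pick one maximizing the number of \emph{special} edges (edges lifting to antiparallel pairs); the argument above shows there are at least $\sigma:=\tau^*-\tau$ of them, and each special $e_i$ then contributes two arcs inside $T_i$, giving the extra $+\sigma$ in the expectation exactly as you intended. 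A minor further point: your balancing device (``pair up stars and $U$-vertices of equal sizes with opposite signs'') is not always available, since equal-size stars need not exist in matching numbers and anti-correlating choices complicates the independent-coordinate setup for Lemma~\ref{lem6}; the paper instead takes the fully independent random assignment, concentrates $|V_1|,|V_2|$ as well, and then equalizes the almost-bisection by moving $o(n)+\gamma n$ vertices of degree at most $4C$, which perturbs both cut values by at most $\varepsilon n/2$.
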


\begin{proof}
Let $G$ be the underlying graph of $D$.
Let $\tau^*$ be the number of tight components in $G[Y]$, and let $\sigma$ be the number of tight components in $G[Y]$ which contains an antiparallel pair when lifted to $D$.
Clearly, $\tau=\tau^*-\sigma$ is the number of essential components in $D[Y]$.

We choose $e_1,\ldots, e_s$ to be the edges of a maximum matching in $G$ such that
\begin{itemize}
\item [(1).] among all matchings of maximum size, we choose one which maximizes the number of free vertices in the set $W$ of vertices not in the matching, and
\item [(2).] subject to the above, we require that the number of edges $e_i$ which corresponds to a pair of arcs in opposite directions when lifted to $D$ (call $e_i$ {\it special}) is maximized.
\end{itemize}
We claim that the number of special edges in $\{e_1,\ldots, e_s\}$ is at least $\sigma$.
It suffices to show that each tight component $B$ in $G[Y]$ which contains an antiparallel pair when lifted to $D$ contributes at least one special edge.
Suppose that $B$ has $2k+1$ vertices and $D[B]$ contains $u\to v$ and $v\to u$.
It is easy to see that any maximum matching in $G$ contains $k$ edges in $B$ and for any matching of $k$ edges in $B$, the remaining vertex of $B$ is a
non-free vertex.
So taking any matching of $k$ edges in $B$ will not affect the property $(1)$.
Now we just need to show that there exists a matching of $k$ edges in $B$ containing the edge $uv$,
which can be obtained from any perfect matching $M$ of $B-v$ by replacing the edge of $M$ containing $u$ with the edge $uv$.

Let $A=\{x\in V(G): d_G(x)\geq 2C/\varepsilon\}$. Since $e(G)\leq e(D)\leq Cn$, we have $|A|\leq \varepsilon n$.
Applying Lemma \ref{lem5} to $G[Y]$ with respect to $A$ and $\{e_1,\ldots, e_s\}$,
we obtain a partition $Y=T_1\cup\cdots\cup T_s\cup U$
such that each $T_i$ induces a star containing the edge $e_i$,
at most one leaf vertex in each $G[T_i]$ has degree at least $2C/\varepsilon$ in the whole graph $G$,
and $U$ is an independent set with $|U|\leq\tau^* +\varepsilon n$.

We then randomly construct a partition $V(D)= V_1 \cup V_2$ as follows:
place each $X_i$ in $V_i$ for $i=1,2$; partition each $T_i$ by independently placing its center vertex $v_i$ on a uniformly random side, and then placing the rest of $T_i\backslash \{v_i\}$ on the other side;
place each remaining vertex (from the set $U$) on a uniformly random side.

Define random variables $Y_1=e(V_1,V_2)$ and $Y_2=e(V_2,V_1)$.
For an arc $e=u\to v$, let $I_e$ be the indicator random variable of the event that $u\in V_1$ and $v\in V_2$.
Then $Y_1=\sum_{e\in A(D)}I_e$.
We see that $\mathbb{E}[I_e]=1$ if $u\in X_1$ and $v\in X_2$ and $\mathbb{E}[I_e]=1/2$ if either
$u\in X_1$ and $v\in Y$, or $u\in Y$ and $v\in X_2$.
For an arc $e$ in $D[Y]$, if $e$ is an arc induced by some set $T_i$, then $\mathbb{E}[I_e]=1/2$;
otherwise, $\mathbb{E}[I_e]=1/4$.
Note that any special edge $e_i$ contributes two arcs induced by $T_i$,
so the total number of arcs induced by the sets $T_i$ is at least $\frac{|Y|-|U|}{2}+\sigma\geq \frac{(n-\gamma n)-(\tau^*+\varepsilon n)}{2}+\sigma$.
Therefore, as $\tau=\tau^*-\sigma$, we have
\begin{align*}
\mathbb{E}[Y_1]\ge \; &  e(X_1,X_2)+\frac{e(X_1,Y)+e(Y,X_2)}{2}+\frac{e(Y)}{4}+\frac{1}{4}
                      \left[\frac{(n-\gamma n)-(\tau^*+\varepsilon n)}{2}+\sigma\right]\\[2.5mm]
\ge \; & e(X_1,X_2)+\frac{e(X_1,Y)+e(Y,X_2)}{2}+\frac{e(Y)}{4}+\frac{n-\tau}{8}-\frac{(\varepsilon+\gamma)n}{8},
\end{align*}
where we choose $\gamma\ll \varepsilon, C$. Similarly, we can derive that
\[
\mathbb{E}[Y_2]\ge e(X_2,X_1)+\frac{e(X_2,Y)+e(Y,X_1)}{2}+\frac{e(Y)}{4}+\frac{n-\tau}{8}-\frac{(\varepsilon+\gamma)n}{8}.
\]

Now we have reached the same position as in Lemma 3.1 of \cite{Lee2014} and Lemma 3.5 of \cite{HWY2017}.
Following the same arguments therein, one can use Lemma~\ref{lem6} (the Azuma-Hoeffding inequality) to control the deviations of random variables $Y_1, Y_2, |V_1|, |V_2|$ of the randomized partition $V_1\cup V_2$.
This leads to an almost bisection $V_1\cup V_2$ with desired bounds on $e(V_1, V_2)$ and $e(V_2,V_1)$, where $|V_i-n/2|=o_{\varepsilon, C}(n)+ \gamma n$.
Since $e(D)\leq Cn$, the number of vertices with degree at most $4C$ is at least $n/2$.
As shown in \cite{HWY2017}, one can equalize $|V_1|$ and $|V_2|$ by moving around $o_{\varepsilon, C}(n)+ \gamma n$ vertices with degree at most $4C$,
which will only affect $e(V_1, V_2)$ and $e(V_2,V_1)$ by at most $\varepsilon n/2$ (as $\gamma\ll \varepsilon, C$).
This completes the proof of Lemma~\ref{lem7}.
\end{proof}

\section{Proof of Theorem~\ref{thm1}}
Let $d$ be a positive integer and let $D$ be a digraph with $n$ vertices and $m$ arcs whose minimum semidegree is at least $d$.
Let $\varepsilon$ be any positive real number and $m$ be sufficiently large compared with $d, \varepsilon$.
Our goal is to find a bisection $V(D)=V_1\cup V_2$ such that
\begin{equation}\label{equ:main}
\min\{e(V_1,V_2),e(V_2, V_1)\}\geq \left(\frac{d}{2(2d+1)}-\varepsilon\right)m.
\end{equation}
As $n(n-1)\ge m$, we may also assume that $n$ is sufficiently large compared with $d, \varepsilon$
(so that Lemma~\ref{lem7} can be applied later).
If $m\geq 256(2d+1)^2 n$, then applying Lemma \ref{lem4} with $\varepsilon=\frac{1}{4(2d+1)}$ yields a bisection $V(D)=V_1\cup V_2$ such that
\[
\min\{e(V_1,V_2),e(V_2,V_1)\}\; \geq \;\frac{m}{4}-\frac{m}{4(2d+1)}=\frac{d}{2(2d+1)}m,
\]
as desired.
So in the rest of the proof we may assume that (also as the minimum outdegree is at least $d$)
\begin{equation}\label{equ:0}
	dn\leq m< 256(2d+1)^2 n.
\end{equation}

\subsection{The initial partition $V(D)=X\cup Y$}
Throughout the rest of the proof,
we consider the partition $V(D)=X\cup Y$ such that $X:=\{v\in
V(D):d_D(v)\geq n^{3/4}\}$ and $Y:=V(D)\setminus X$. Then, by \eqref{equ:0},
\[
|X|\cdot n^{3/4}\leq \sum_{v\in X}d_D(v)\;\leq \sum_{v\in V(D)}d_D(v)=2m< 512(2d+1)^2 n.
\]
Hence, $|X|=O(m/n^{3/4})=O(n^{1/4})$, and thus  $$e(X)\leq|X|^{2}=O(n^{1/2})=O(m^{1/2}).$$

We emphasize that in the following proof, we will only use the semidegree condition for vertices in $Y$.
For the sake of simplicity we remove all arcs within $X$, and update $m$ to be the new total number of arcs in the resulting digraph.\footnote{Note that this will not change the indegree and outdegree of each vertex in $Y$.}
Therefore from now on, we may assume that
\begin{align*}\label{equ:1}
	e(X)=0.
\end{align*}
Note that this will only affect $O(m^{1/2})$ amount of arcs.
So to prove \eqref{equ:main}, it suffices to show that the resulting digraph (which we still call $D$) has a bisection $V(D)=V_1\cup V_2$
for which
\begin{equation}\label{equ:main2}
\min\{e(V_1,V_2),e(V_2,V_1)\}\ge\Big(\frac{d}{2(2d+1)}-\frac{\varepsilon}{2}\Big)m.
\end{equation}

We will first find a good bipartition for the set of vertices of large degree $X$
and we begin by introducing some useful concepts.
As we shall see later, the vertices in $X$ whose outdegree and indegree differ significantly play important roles.
For $v\in X$, we define
$$s^+(v):=d^+(v)-d^-(v), ~~ s^-(v):=d^-(v)-d^+(v), ~ \mbox{ and }
 ~ s(v):=|d^+(v)-d^-(v)|.$$
For a partition $X=X_1\cup X_2$,
its ordered {\it gap} is defined as
$$\theta(X_1,X_2)=  \Big(e(X_{1},Y)+e(Y,X_{2})\Big)-\Big(e(X_{2},Y)
+e(Y,X_{1})\Big).$$
Since $e(X)=0$, we also see that
\begin{equation}\label{theta}
\begin{aligned}
\theta(X_1,X_2) = \sum_{v\in X_1}s^+(x)-\sum_{v\in X_2}s^+(x).
\end{aligned}
\end{equation}
For a given partition $X=X_1\cup X_2$, we say a vertex $v\in X$ is {\it forward} if either $v\in X_1$ and $s^+(v)>0$, or $v\in X_2$ and $s^-(v)>0$;
otherwise, we say $v$ is {\it backward}.
Let $X_f:=\{v\in X: v \mbox{ is forward}\}$ and $X_b:=\{v\in X: v \mbox{ is backward}\}$.
Then we can rewrite \eqref{theta} as the following
\begin{equation}\label{theta1}
\theta(X_1,X_2)=\sum_{v\in X_f}s(x) - \sum_{v\in X_b}s(x).
\end{equation}

\subsection{Partitioning $X$}
In the rest of this section, we choose the partition $X=X_1\cup X_2$ such that $|\theta(X_1,X_2)|$ is minimum.
Note that $\theta(X_1,X_2)=-\theta(X_2,X_1)$.
So by swapping $X_1$ and $X_2$ if necessary, we may assume $\theta(X_1,X_2)\geq 0$.
In the following, we write $\theta=\theta(X_1,X_2)$, unless otherwise specified.

Let $\tau$ be the number of essential components in $D[Y]$.
Note that $n$ is sufficiently large compared with $d, \varepsilon$.
So applying Lemma \ref{lem7} with $C :=256(2d+1)^2$, $D$ has a bisection
$V(D)=W_1\cup W_2$ such that $X_i\subseteq W_i$ for $i=1,2$, and moreover (note that $e(X)=0$)
\begin{align*}
&\min \{e(W_1,W_2),e(W_2,W_1)\}\\[0.5mm]
\ge\; & \frac{1}{2}\min\{e(X_1,Y)+e(Y,X_2),e(X_2,Y)+e(Y,X_1)\}+\frac{e(Y)}{4}+\frac{n-\tau}{8}-\frac{\varepsilon}{4} n\\[1.7mm]
=\; & \frac{e(X,Y)+e(Y,X)-\theta}{4}+\frac{e(Y)}{4}+\frac{n-\tau}{8}-\frac{\varepsilon}{4} n\\[0.7mm]
=\; & \frac{m-\theta}{4}+\frac{n-\tau}{8}-\frac{\varepsilon}{4} n\\[1.5mm]  
\ge\; & \frac{m-\theta}{4}+\frac{n-\tau}{8}-\frac{\varepsilon}{2} m,  
\end{align*}
where the last inequality holds as $m\geq dn/2\geq n/2$ from \eqref{equ:0}.
To prove \eqref{equ:main2} (and thus Theorem~\ref{thm1}), it suffices to show that
$$\frac{m-\theta}{4}+\frac{n-\tau}{8}-\frac{\varepsilon}{2}m\ge \Big(\frac{d}{2(2d+1)}-\frac{\varepsilon}{2}\Big)m,$$
which is equivalent to the following
\begin{equation}\label{equ:main3}
\frac{m}{2d+1}+\frac{n}{2}-\theta-\frac{\tau}{2}\ge 0.
\end{equation}
Note that we have $\tau\le n$. If $\theta\leq \frac{m}{2d+1}$, then \eqref{equ:main3} holds trivially.
So we may assume
\begin{equation}\label{eq0}
\theta>\frac{m}{2d+1}>0.
\end{equation}
We point out that since $\theta>0$, it follows from \eqref{theta1} that $X$ has at least one forward vertex.

Our strategy in the coming proof then is to prove \eqref{equ:main3} by establishing a sequence of claims.
The following two claims can be found in \cite{Hou2020} whose proofs can be traced back to \cite{Lee2014}.
For the sake of completeness, we include their short proofs here.

\begin{claim}\label{cla1}
Call a vertex $v\in X$ {\bf huge} if $s(v)\ge \theta$. Then all forward vertices in $X$ are huge.
In particular, there is at least one huge vertex.
\end{claim}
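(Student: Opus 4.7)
The plan is to prove the claim by a single swap argument that leverages the minimality of $|\theta(X_1,X_2)|$ chosen at the start of Subsection~3.2. Given a forward vertex $v$, I will move $v$ to the opposite side of the partition, compute how the gap changes, and compare with the minimum.

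More precisely, suppose first that $v\in X_1$ with $s^+(v)>0$, and let $(X_1',X_2')=(X_1\setminus\{v\},\,X_2\cup\{v\})$. Using formula \eqref{theta}, the effect of the swap is
\[
\theta(X_1',X_2')=\theta-2s^+(v)=\theta-2s(v),
\]
since $s^+(v)=s(v)$ when $s^+(v)>0$. The minimality of $|\theta|$ over all bipartitions of $X$ gives $|\theta-2s(v)|\ge\theta$, and since $s(v)>0$ this forces $\theta-2s(v)\le-\theta$, i.e., $s(v)\ge\theta$. The case $v\in X_2$ with $s^-(v)>0$ is symmetric: moving $v$ to $X_1$ changes the gap by $+2s^+(v)=-2s(v)$, so $\theta(X_1',X_2')=\theta-2s(v)$, and the same inequality $s(v)\ge\theta$ follows.

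For the ``in particular'' part, I will invoke \eqref{theta1} together with the assumption $\theta>0$ recorded in \eqref{eq0}: the identity
\[
\theta=\sum_{v\in X_f}s(v)-\sum_{v\in X_b}s(v)
\]
is a nonnegative combination on the right-hand side (subtracting a nonnegative quantity), so $\theta>0$ immediately implies $X_f\ne\emptyset$, and any such forward vertex is huge by the first part.

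I do not anticipate any real obstacle here: the claim is a routine first-order optimality statement coming from the extremal choice of $X_1\cup X_2$, and the only thing to be careful about is tracking the sign conventions in the definition of $s^+$ and in \eqref{theta} so that the swap computations on each side of the partition yield the same conclusion $s(v)\ge\theta$.
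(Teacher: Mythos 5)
Your proposal is correct and matches the paper's own argument: a single-vertex swap combined with the minimality of $|\theta(X_1,X_2)|$ gives $|\theta-2s(v)|\ge\theta$ and hence $s(v)\ge\theta$, while the existence of a forward vertex follows from \eqref{theta1} and $\theta>0$, exactly as noted after \eqref{eq0}. No gaps; the sign bookkeeping in both cases is handled correctly.
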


\begin{proof}
As we just point out after \eqref{eq0}, there is at least one forward vertex in $X$.
Take any forward vertex $v\in X$.
By definition, $s(v)>0$.
Let $X'_1\cup X'_2$ be the new partition of $X$ obtained from $X_1\cup X_2$ by switching the side of $v$. Then
\begin{align*}
\theta(X'_1,X'_2)=\; &\big(e(X'_1,Y)+e(Y,X'_2)\big)-\big(e(X'_2,Y)+e(Y,X'_1)\big)\\[2mm]
 =\; & \big(e(X_1,Y)+e(Y,X_2)-s(v)\big)-\big(e(X_2,Y)+e(Y,X_1)+s(v)\big)=\theta-2s(v).
\end{align*}
Since $X_1\cup X_2$ is a partition of $X$ minimizing $|\theta(X_1, X_2)|$,
we have $|\theta(X'_1,X'_2)|=|\theta-2s(v)|\geq \theta$.
This forces $s(v)\geq \theta$, which means $v$ is huge.
\end{proof}

Let $X'$ be the set of huge vertices in $X$.
By Claim \ref{cla1}, we know that all vertices in $X\backslash X'$ are backward vertices.
Define $$\alpha=|X'| ~~  \mbox{ and } ~~ g=\sum_{v\in X\backslash X'}s(v).$$

\begin{claim}\label{cla2}
	For any forward vertex $v\in X$, we have $s(v)\geq \theta + g$.
\end{claim}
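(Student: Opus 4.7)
The plan is to argue by contradiction using the minimality of $|\theta(X_1, X_2)|$, mirroring the one-vertex swap of Claim~\ref{cla1} but now swapping $v$ together with a carefully chosen collection of backward vertices from $X \setminus X'$.

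Suppose for contradiction that some forward vertex $v$ satisfies $s(v) < \theta + g$. Claim~\ref{cla1} already gives $s(v) \geq \theta$, so the quantity $s(v) - \theta$ lies in $[0, g)$. List the backward vertices of $X \setminus X'$ with positive $s$-value as $w_1, w_2, \ldots, w_k$. None of them is huge, so each $s(w_i) < \theta$, and $s(w_1) + \cdots + s(w_k) = g$. Set $\sigma_j := s(w_1) + \cdots + s(w_j)$ with $\sigma_0 = 0$, so $\sigma_0 \leq s(v) - \theta < g = \sigma_k$.

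The central step is to choose the smallest index $j$ with $\sigma_j > s(v) - \theta$, which exists by the bracketing above. Minimality of $j$ gives $\sigma_{j-1} \leq s(v) - \theta$, and combined with the step-size bound $s(w_j) < \theta$ this yields
\[
s(v) - \theta \;<\; \sigma_j \;=\; \sigma_{j-1} + s(w_j) \;<\; (s(v) - \theta) + \theta \;=\; s(v).
\]
Now switch the sides of $v$ and of $w_1, \ldots, w_j$ to obtain a new partition $X_1' \cup X_2'$ of $X$. Inspecting \eqref{theta1}, moving the forward vertex $v$ (which becomes backward) changes $\theta$ by $-2s(v)$, while moving each backward vertex $w_i$ with $s(w_i) > 0$ (which becomes forward) changes $\theta$ by $+2s(w_i)$. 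Consequently,
\[
\theta(X_1', X_2') \;=\; \theta - 2s(v) + 2\sigma_j \;\in\; (-\theta, \theta),
\]
so $|\theta(X_1', X_2')| < \theta$, contradicting the minimality of $|\theta(X_1, X_2)|$.

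The only delicate point is simultaneously arranging both $\sigma_j > s(v) - \theta$ and $\sigma_j < s(v)$: the lower bound is merely the defining property of $j$, while the upper bound crucially uses that each step size $s(w_i)$ is strictly less than $\theta$, which is exactly the non-huge hypothesis on $X \setminus X'$. This interplay between the huge threshold and the available step sizes is what makes $\theta + g$ the natural target and is the only place where any care is needed; the rest is bookkeeping via \eqref{theta1}.
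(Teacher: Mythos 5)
Your proof is correct and is essentially the paper's argument in a slightly different dress: the paper switches $v$ and then the non-huge vertices one at a time, stopping at the first moment the gap crosses, while you pick the first prefix sum $\sigma_j$ crossing $s(v)-\theta$; both hinge on Claim~\ref{cla1} forcing each step size $s(w_i)<\theta$ and on the minimality of $|\theta(X_1,X_2)|$. The bookkeeping via \eqref{theta1} (change $-2s(v)+2\sigma_j$) matches the paper's computation with $m_f,m_b$, so no gap.
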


\begin{proof}
For the bipartition $X=X_1\cup X_2$,
we call an arc in $(X_1,Y)\cup (Y,X_2)$ {\it forward} and an arc in $(X_2,Y)\cup (Y,X_1)$ {\it backward}.
Let $m_f=e(X_1,Y)+e(Y,X_2)$ and $m_b=e(X_2,Y)+e(Y,X_1)$. Note that $\theta=m_f-m_b$.

Suppose to the contrary that there exists a forward vertex $v\in X$ such that $g>s(v)-\theta\ge 0$.
Recall that $g=\sum_{v\in X\backslash X'} s(v)$.
Then $X\backslash X'=\{u\in X : s(u)<\theta\}\neq \emptyset$.
Now we switch vertices between $X_1$ and $X_2$ in two consecutive rounds.
First, we move $v$ to the other part of the partition $X=X_1\cup X_2$.
Then the number of forward arcs in the resulting partition of $X$
becomes $m_f'=m_f-s(v)\le m_f-\theta=m_b$.
Next, we choose a vertex in $X\backslash X'$ (which remains backward) at a time and switch it to the other part.
After we switch all vertices in $X\backslash X'$,
the number of forward arcs becomes $m_f''=m_f'+g=m_f-s(v)+g> m_f-s(v)+(s(v)-\theta)=m_f-\theta=m_b$.

In the second round, when the first time the number of forward arcs $m_f^*$ is greater than $m_b$, it must be the case that $m_f^*< m_b+\theta=m_f$.
Note that during the above switching process, the number of forward and backward arcs in total always equals to $m_f+m_b$.
This tells that at the very same moment, the number of backward arcs $m_b^*$ also satisfies $m_b< m_b^*<m_f$.
So we get $|m_f^*-m_b^*|<|m_f-m_b|=\theta$, contradicting the minimality of the gap $\theta$.
This proves that $s(v)\geq \theta + g$.
\end{proof}

\subsection{Bounding the arcs between $X$ and $Y$}
We turn to some key estimations on the sizes of $(X,Y)$ and $(Y,X)$,
which provide new ingredients to Question~\ref{prob}.
Recall that $\tau$ denotes the number of essential components in $D[Y]$.

\begin{claim}\label{cla3}
$(2d+1)\tau\le |Y|+2\cdot\min\{e(X,Y),e(Y,X)\}$.
\end{claim}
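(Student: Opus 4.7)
The plan is to prove the claim by a direct component-wise estimate, relying on the antiparallel-free property of essential components together with the semidegree condition on $Y$. First I would enumerate the essential components of $D[Y]$ as $B_1,\ldots,B_\tau$ and set $n_i:=|B_i|$. Since $B_i$ is essential, $D[B_i]$ contains no pair of antiparallel arcs, so the number $f_i$ of arcs of $D$ with both endpoints in $B_i$ equals the edge count of the underlying simple graph $G_D[B_i]$; in particular $f_i\le\binom{n_i}{2}$.

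The next step is to apply the minimum semidegree condition. Because $B_i$ is a connected component of $D[Y]$, every arc of $D$ incident to a vertex of $B_i$ either stays inside $B_i$ or has its other endpoint in $X$. Summing $d^+(v)\ge d$ over $v\in B_i$ therefore yields $f_i + e(B_i,X)\ge dn_i$, and hence
\[
e(B_i,X) \;\ge\; dn_i - f_i \;\ge\; dn_i - \binom{n_i}{2};
\]
the analogous bound $e(X,B_i)\ge dn_i-\binom{n_i}{2}$ follows symmetrically from $d^-(v)\ge d$.

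The heart of the argument is then the elementary identity
\[
2\left(dn_i - \binom{n_i}{2}\right) - \bigl((2d+1)-n_i\bigr) \;=\; (n_i-1)\bigl((2d+1)-n_i\bigr),
\]
whose right-hand side is nonnegative precisely when $1\le n_i\le 2d+1$. Thus for such components $(2d+1)-n_i\le 2\,e(B_i,X)$ and $(2d+1)-n_i\le 2\,e(X,B_i)$, while for $n_i>2d+1$ the same two inequalities hold trivially, since $(2d+1)-n_i<0\le 2\,e(B_i,X)$. Summing over $i$, using $\sum_i n_i\le|Y|$ together with $\sum_i e(B_i,X)=e(Y,X)$ and $\sum_i e(X,B_i)=e(X,Y)$, yields exactly
\[
(2d+1)\tau \;\le\; |Y| + 2\min\{e(X,Y),\,e(Y,X)\},
\]
which is the desired conclusion.

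I do not foresee a serious obstacle in this argument; it really boils down to one semidegree-sum and one quadratic identity. The subtle point, and what makes the estimate sharp, is that it is the essential (not merely tight) components that are being counted: if antiparallel pairs inside $B_i$ were permitted then $f_i$ could be as large as $n_i(n_i-1)$ rather than $\binom{n_i}{2}$, and the per-component inequality would fail for small $n_i$. This is consistent with the authors' remark preceding Subsection~3.3 that it is crucial to use $\tau$ as the number of essential components.
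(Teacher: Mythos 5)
Your proof is correct and follows essentially the same route as the paper: the per-component bound $e(B_i,X)\ge dn_i-\binom{n_i}{2}$ obtained from the semidegree condition plus the antiparallel-free property is exactly the paper's estimate $e(D_i,X)\ge di-i(i-1)/2$, and the paper only organizes the bookkeeping differently (separating components of order less than $2d+1$ from the rest and weakening the per-component bound to $e(D_i,X)\ge d$ before summing). One small slip: $\sum_i e(B_i,X)=e(Y,X)$ should be $\sum_i e(B_i,X)\le e(Y,X)$, since $D[Y]$ may also contain non-essential components with arcs to $X$ --- but only this inequality is needed, so your argument stands.
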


\begin{proof}
For $i=1, 3, \ldots, 2d-1$, let $\tau_i$ be the number of essential components in $D[Y]$ of order $i$;  and let $\tau'$ be the
number of essential components in $D[Y]$ of order at least $2d+1$. Then
\begin{equation}\label{eq1}
\tau=\sum_{i=1}^{d}\tau_{2i-1}+\tau'.
\end{equation}
Considering the number of vertices in $Y$, it yields that
\begin{equation}\label{eq2}
\sum_{i=1}^{d}(2i-1)\tau_{2i-1}+(2d+1)\tau'\le |Y|.
\end{equation}
Now we count the number of arcs from $Y$ to $X$. For each essential
component $D_i$ of order $i$,
as it does not contain two arcs in opposite directions,
we have $e(D_i)\le i(i-1)/2$.
Thus, since the outdegree of every vertex in $Y$ is at least $d$,
we see that $e(D_i,X)\ge d i-i(i-1)/2$. Viewing $f(i)=d i-i(i-1)/2$ as a function of $i$
over the interval $[1, 2d-1]$, we see
that it achieves its minimum at $i=1$ (since $d\ge 1$). Hence, $e(D_i,X)\ge f(1)=d$ for every $i\in [2d-1]$.
Thus,
\begin{equation}\label{eq3}
\sum_{i=1}^{d} d\cdot \tau_{2i-1} \leq e(Y,X).
\end{equation}
Combining \eqref{eq1}, \eqref{eq2} with \eqref{eq3}, we can derive that
$$(2d+1)\tau\;\le\; \sum_{i=1}^{d}(2d+2i-1)\tau_{2i-1}+(2d+1)\tau' \le |Y|+2e(Y,X).$$
Note that the indegree of each vertex in $Y$ is also at least $d$.
Using the same arguments as above, counting the number of arcs from $X$ to $Y$ would lead to
$$(2d+1)\tau\le |Y|+2e(X,Y),$$
proving the claim.
\end{proof}

For convenience, we define
\[
e_{X,Y}:=e(X,Y)+e(Y,X) ~~ \mbox{and} ~~b=\sum_{v\in X} \min\{d^+(v),d^-(v)\}.
\]
For any $v\in X$, we have $d(v)-s(v)=2\min\{d^+(v),d^-(v)\}.$
Therefore
\begin{equation}\label{eq00}
m\geq m-e(Y)=e_{X,Y}=\sum_{v\in X} d(v)=\sum_{v\in X}s(v) +2b=\sum_{v\in X'}s(v) +g +2b\geq \alpha\theta,
\end{equation}
where $\alpha=|X'|$ denotes the number of huge vertices in $X$.
By Claim \ref{cla1}, we have
\begin{equation}\label{tau}
1\le\alpha\leq \frac{m}{\theta}< 2d+1,
\end{equation}
where the second inequality holds because of \eqref{eq00} and the third inequality follows from \eqref{eq0}.

Let $X'=\{v_1,v_2,\ldots,v_{\alpha}\}$
with $s(v_i)=\Delta_i$ for $i\in [\alpha]$ such that $\Delta_{1}\ge\Delta_{2}\ge\cdots\ge\Delta_{\alpha}\ge\theta$.
After \eqref{eq0} we point out that there exists at least one forward vertex (say $v_i\in X'$).
By Claim~\ref{cla2}, $\Delta_i=s(v_i)\geq \theta+g$ and thus we have
\begin{equation}\label{g}
0\leq g\le \Delta_1-\theta.
\end{equation}
Throughout we let $\beta=\lceil\alpha/2\rceil$.
By \eqref{tau}, we see $1\leq \beta\leq d$.

\begin{claim}\label{cla-new}
$\max\{e(X,Y),e(Y,X)\}\geq \beta\theta+b$ and $\min\{e(X,Y),e(Y,X)\}\leq \sum_{i=1}^\alpha \Delta_i +g-\beta\theta+b$.
\end{claim}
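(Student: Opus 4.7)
The plan is to express $e(X,Y)$ and $e(Y,X)$ directly in terms of $b$, the $\Delta_i$'s, and $g$, and then to apply a simple pigeonhole argument on the huge vertices in $X'$. First I would let $X^+ := \{v \in X : d^+(v) > d^-(v)\}$ and $X^- := \{v \in X : d^-(v) > d^+(v)\}$. For every $v \in X^+$ one has $d^+(v) = \min\{d^+(v),d^-(v)\} + s(v)$ and $d^-(v) = \min\{d^+(v),d^-(v)\}$, with roles swapped for $v \in X^-$; vertices with $d^+(v) = d^-(v)$ contribute only $\min\{d^+(v),d^-(v)\}$ to each of $d^+, d^-$. Since $e(X) = 0$, summing over $X$ gives the clean identities
\[
e(X,Y) \;=\; b + \sum_{v\in X^+} s(v), \qquad e(Y,X) \;=\; b + \sum_{v\in X^-} s(v),
\]
whose sum recovers $e(X,Y) + e(Y,X) = 2b + \sum_{i=1}^\alpha \Delta_i + g$ by the definitions of $b$, the $\Delta_i$'s, and $g$.

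Next, since every huge vertex satisfies $s(v) \geq \theta > 0$, the set $X'$ is entirely contained in $X^+ \cup X^-$. Setting $\alpha_\pm := |X' \cap X^\pm|$, we have $\alpha_+ + \alpha_- = \alpha$, and hence by pigeonhole $\max\{\alpha_+, \alpha_-\} \geq \lceil \alpha/2 \rceil = \beta$. By the symmetry of the two directions I may assume $\alpha_+ \geq \beta$; then, using $s(v) \geq \theta$ on the huge vertices in $X' \cap X^+$,
\[
\sum_{v \in X^+} s(v) \;\geq\; \sum_{v \in X' \cap X^+} s(v) \;\geq\; \alpha_+ \theta \;\geq\; \beta \theta,
\]
which yields $e(X,Y) \geq b + \beta\theta$ and therefore the first inequality $\max\{e(X,Y), e(Y,X)\} \geq \beta\theta + b$. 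The second inequality then follows immediately by subtracting this bound from the identity $e(X,Y) + e(Y,X) = 2b + \sum_i \Delta_i + g$.

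I do not expect a genuine obstacle here: the two identities above are straightforward book-keeping based on $e(X) = 0$, and the rest is a one-line pigeonhole combined with the defining property $s(v) \geq \theta$ of huge vertices. The only point requiring mild care is verifying that balanced vertices $\{v \in X : d^+(v) = d^-(v)\}$ contribute only to $b$ in the two identities (automatic since they have $s(v) = 0$) and that no such vertex is huge (immediate from $\theta > 0$), which together ensure $\alpha_+ + \alpha_- = \alpha$ and validate the pigeonhole step.
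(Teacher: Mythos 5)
Your proof is correct and follows essentially the same route as the paper: a pigeonhole on the huge vertices according to the sign of $d^+(v)-d^-(v)$ (possible since $s(v)\geq\theta>0$ on $X'$), the lower bound $\max\{e(X,Y),e(Y,X)\}\geq \beta\theta+b$ obtained from $e(X,Y)=\sum_{v\in X}d^+(v)$ (using $e(X)=0$), and then subtraction from the identity $e(X,Y)+e(Y,X)=\sum_{i=1}^\alpha\Delta_i+g+2b$. Your explicit identities $e(X,Y)=b+\sum_{v\in X^+}s(v)$ and $e(Y,X)=b+\sum_{v\in X^-}s(v)$ are just a slightly cleaner packaging of the paper's estimate, so no substantive difference.
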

\begin{proof}
There always exist $\beta$ vertices in $X'$, say $X^*=\{v_{j_1},v_{j_2},\ldots,v_{j_{\beta}}\}$,
satisfying either (i) $s(v_{j_i})=s^+(v_{j_i})$ for every $i\in [\beta]$, or (ii) $s(v_{j_i})=s^-(v_{j_i})$ for every $i\in [\beta]$.

If the case (i) occurs,
then $s(v_{j_i})=s^+(v_{j_i})=d^+(v_{j_i})-d^-(v_{j_i})\geq \theta$ for $i\in [\beta]$, implying
\begin{equation*}\label{XY}
\begin{aligned}
e(X,Y)=\; &\sum_{v\in X}d^+(v)\\[1mm]
 =\; & \sum_{v\in {X^*}}d^+(v)+\sum_{v\in {X\backslash X^*}}d^+(v)\\[1mm]
\ge\; & \sum_{v\in {X^*}}(d^-(v)+\theta)+\sum_{v\in {X\backslash X^*}}d^+(v)\\[1mm]
\ge\; & \beta\theta+\sum_{v\in X}\min\{d^+(v),d^-(v)\}\\[1.5mm]
 =\; & \beta\theta+b.
\end{aligned}
\end{equation*}
If the case (ii) occurs, by the similar arguments we can obtain that
$e(Y,X)\ge \beta\theta+b.$
This shows that $\max\{e(X,Y),e(Y,X)\}\geq \beta\theta+b$.
Finally, using \eqref{eq00} we have
\[
\max\{e(X,Y),e(Y,X)\}+\min\{e(X,Y),e(Y,X)\}=e_{X,Y}=\sum_{v\in X} s(v)+2b=\sum_{i=1}^\alpha \Delta_i+g+2b,
\]
which implies that
$\min\{e(X,Y),e(Y,X)\}\leq \sum_{i=1}^\alpha \Delta_i +g-\beta\theta+b$, as desired.
\end{proof}

The next claim gives a further upper bound on $\min\{e(X,Y),e(Y,X)\}$.

\begin{claim}\label{cla5}
$\min\{e(X,Y),e(Y,X)\}\le 2\beta\Delta_1-(\beta+1)\theta+b,$ where $1\leq \beta\leq d$.
\end{claim}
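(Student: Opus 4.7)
The plan is to strengthen Claim~\ref{cla-new}'s bound $\min\{e(X,Y),e(Y,X)\}\le \sum_{i=1}^{\alpha}\Delta_i+g-\beta\theta+b$ by exploiting the minimality of $\theta$ against a carefully chosen alternative partition of $X$. The guiding observation is that it suffices to prove $\sum_{i=1}^{\alpha}\Delta_i+g\le 2\beta\Delta_1-\theta$ in every case where the desired estimate is not otherwise immediate.

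Concretely, I would consider the alternative partition $X_1'\cup X_2'$ obtained from the optimal $X_1\cup X_2$ by placing the top $\beta$ huge vertices $v_1,\ldots,v_\beta$ on their \emph{backward} side (opposite to the $\epsilon$-aligned position) while placing every other vertex of $X$---huge or non-huge---on its forward side (aligned with $\epsilon(v)$). Using \eqref{theta1} a direct computation yields
\[
\theta(X_1',X_2')\;=\;\sum_{i=\beta+1}^{\alpha}\Delta_i\;+\;g\;-\;\sum_{i=1}^{\beta}\Delta_i,
\]
and the minimality of $\theta$ forces $|\theta(X_1',X_2')|\ge\theta$. In the sub-case $\theta(X_1',X_2')\le-\theta$ a rearrangement gives $\sum_{i=1}^{\alpha}\Delta_i+g\le 2\sum_{i=1}^{\beta}\Delta_i-\theta\le 2\beta\Delta_1-\theta$, and substituting into Claim~\ref{cla-new} delivers $\min\{e(X,Y),e(Y,X)\}\le 2\beta\Delta_1-(\beta+1)\theta+b$, as required.

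The main obstacle is the opposite sub-case $\theta(X_1',X_2')\ge\theta$, which only yields $g\ge\theta+\sum_{i=1}^{\beta}\Delta_i-\sum_{i=\beta+1}^{\alpha}\Delta_i$. Combined with $g\le\Delta_1-\theta$ from Claim~\ref{cla2}, this forces $\Delta_1\ge 2\theta$ and $\sum_{i=1}^{\beta}\Delta_i-\sum_{i=\beta+1}^{\alpha}\Delta_i\le\Delta_1-2\theta$, but not the target inequality on its own. I plan to close this case by a pigeonhole argument on the sign indicator $\epsilon(v)\in\{\pm 1\}$ of the huge vertices: since $|X'|=\alpha\le 2\beta$, at least one of $k^\pm:=|\{v\in X':\epsilon(v)=\pm 1\}|$ is at most $\lfloor\alpha/2\rfloor$, so one of $e(X,Y)-b=\sum_{v:\epsilon(v)=+1}s(v)$ or $e(Y,X)-b=\sum_{v:\epsilon(v)=-1}s(v)$ receives contributions from at most $\lfloor\alpha/2\rfloor\le\beta$ huge vertices and can be bounded directly by $\lfloor\alpha/2\rfloor\cdot\Delta_1+g$. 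Combining this direct estimate with the structural constraint produced in the hard sub-case should close the argument. The most delicate situation is the balanced configuration $\alpha=2\beta$, $k^+=k^-=\beta$, in which both signed sums contain exactly $\beta$ huge contributions; I anticipate addressing it by applying the alternative-partition construction a second time to a non-extremal set of $\beta$ huge vertices.
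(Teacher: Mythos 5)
Your comparison-partition construction is sound: the gap computation via \eqref{theta1} is correct, and in the sub-case $\theta(X_1',X_2')\le-\theta$ your derivation of the claim from Claim~\ref{cla-new} and \eqref{g} is complete. Moreover, your pigeonhole bound $\min\{e(X,Y),e(Y,X)\}\le\lfloor\alpha/2\rfloor\Delta_1+g+b$ does more work than you credit it with: for odd $\alpha$ it gives at most $(\beta-1)\Delta_1+g+b\le\beta\Delta_1-\theta+b\le 2\beta\Delta_1-(\beta+1)\theta+b$ outright, and for even $\alpha$ with $\beta\ge2$ it gives at most $(\beta+1)\Delta_1-\theta+b$, which suffices because in your hard sub-case $g\ge\theta$ and \eqref{g} force $\Delta_1\ge2\theta$, hence $(\beta-1)\Delta_1\ge\beta\theta$. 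So everything reduces to the single situation $\alpha=2$, $\beta=1$, sub-case $\theta(X_1',X_2')\ge\theta$, with the two huge vertices of opposite intrinsic sign --- exactly the balanced configuration you defer --- and there your write-up stops at a plan.

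That remaining case is a genuine gap, because the tools you restrict yourself to cannot supply the missing inequality. In the worst split of $g$ you need roughly $\Delta_2+g\le 2\Delta_1-2\theta$, but every constraint you generate --- $g\le\Delta_1-\theta$ from \eqref{g}, the hard-sub-case inequality $\Delta_2+g-\Delta_1\ge\theta$, and $|\pm\Delta_1\pm\Delta_2+g|\ge\theta$ coming from any alternative partition that keeps all non-huge vertices forward (which includes your proposed ``second application to a non-extremal set of huge vertices'') --- is simultaneously consistent with, say, $\Delta_1=\Delta_2=\tfrac{5}{2}\theta$, $g=\tfrac{3}{2}\theta$, $g^+=g^-=\tfrac{3}{4}\theta$, for which $\min\{\Delta_1+g^+,\Delta_2+g^-\}=\tfrac{13}{4}\theta>3\theta=2\Delta_1-2\theta$. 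Such configurations are in fact impossible, but excluding them requires information your proposal never extracts: by Claim~\ref{cla1} all non-huge vertices are backward in the actual minimizing partition, so $\theta$ itself satisfies the identity \eqref{eq7}, $\theta=\sum_i\Delta_{p_i}-\sum_i\Delta_{q_i}-g$ over the huge vertices only, and the paper finishes by splitting on whether the number $s$ of forward huge vertices is at most $\beta$ or at least $\beta+1$; alternatively one would need a finer switching argument that moves the non-huge vertices one at a time, in the spirit of the proof of Claim~\ref{cla2}. Without one of these ingredients the balanced case, and hence the claim, is not proved.
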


\begin{proof}
For clarity of presentation we divide this proof into two cases according to the parity of $\alpha$.
First assume that $\alpha$ is odd. Then $\alpha=2\beta-1$ where $\beta\in [1,d]$.
It follows from Claim~\ref{cla-new} and \eqref{g} that
\begin{align*}
\min\{e(X,Y),e(Y,X)\}\leq \sum_{i=1}^{2\beta-1} \Delta_i +g-\beta\theta+b\leq 2\beta\Delta_1-(\beta+1)\theta+b.
\end{align*}

It remains to consider when $\alpha$ is even. Then $\alpha=2\beta$, where $\beta\in [1,d]$.
Recall \eqref{theta1} and the definitions of $X_f, X_b$.
Let $v_{p_1},v_{p_2},\ldots,v_{p_s}$ be all forward vertices in $X'$ and $v_{q_1},v_{q_2},\ldots,v_{q_t}$ be all backward vertices in $X'$, where $s+t=|X'|=2\beta$.
By Claim \ref{cla1},
all vertices in $X\backslash X'$ are backward vertices. Hence,
\[
X_f=\{v_{p_1},v_{p_2},\ldots,v_{p_s}\} ~ \mbox{ and }~
X_b=\{v_{q_1},v_{q_2},\ldots,v_{q_t}\}\cup X\backslash X'.\
\]
Then by \eqref{theta1} and the definition of $g$, we have
\begin{equation}\label{eq7}
\theta=\sum_{v\in X_f}s(x) - \sum_{v\in X_b}s(x)=\sum_{i=1}^{s}\Delta_{p_i}-\sum_{i=1}^{t}\Delta_{q_i}-g.
\end{equation}
By Claim~\ref{cla-new}, we get
\begin{equation}\label{eq-min}
\min\{e(X,Y),e(Y,X)\}\leq \sum_{i=1}^\alpha \Delta_i +g-\beta\theta+b=\sum_{i=1}^{s}\Delta_{p_i}+\sum_{i=1}^{t}\Delta_{q_i}+g-\beta\theta+b
\end{equation}
If $s\le \beta$, then using \eqref{eq7} the above inequality implies
\[
\min\{e(X,Y),e(Y,X)\}\leq 2\sum_{i=1}^{s}\Delta_{p_i}-\theta-\beta\theta+b\leq 2\beta\Delta_1-(\beta+1)\theta+b,
\]
as desired.
So $s\ge \beta+1$. Then $t=2\beta-s\le \beta-1$.
Using the equivalent form $\sum_{i=1}^{s}\Delta_{p_i}=\theta+\sum_{i=1}^{t}\Delta_{q_i}+g$ of \eqref{eq7},
the inequality \eqref{eq-min} gives that
\[
\min\{e(X,Y),e(Y,X)\}\leq 2\sum_{i=1}^{t}\Delta_{q_i}+2g+\theta-\beta\theta+b\leq 2(t+1)\Delta_1-(\beta+1)\theta+b\leq 2\beta\Delta_1-(\beta+1)\theta+b,
\]
where the second last inequality holds by \eqref{g}.
This claim now is complete.
\end{proof}

\subsection{Completing the proof}
Finally, we are ready to complete the proof by showing \eqref{equ:main3}.

We will need one more inequality that
\begin{equation}\label{eq8}
m\ge \beta\theta+b+d|Y|.
\end{equation}
To see this, since $\delta^0(D)=\min\{\delta^+(D),\delta^-(D)\}\ge d$,
we have $e(Y,X)+e(Y)\ge d|Y|$ and $e(X,Y)+e(Y)\ge d|Y|.$
That says, $$\min\{e(X,Y),e(Y,X)\}+e(Y)\geq d|Y|.$$
Indeed, this together with Claim~\ref{cla-new} give the desired inequality that
$$m=e_{X,Y}+e(Y)=\max\{e(X,Y),e(Y,X)\}+\min\{e(X,Y),e(Y,X)\}+e(Y)\ge \beta\theta+b+d|Y|.$$

We can also derive from Claims~\ref{cla3} and \ref{cla5} that
\begin{equation}\label{eq9}
	(2d+1)\tau\le |Y|+4\beta\Delta_1-2(\beta+1)\theta+2b.
\end{equation}
Since $e(X)=0$, we see $|Y|\geq |d^+(v_1)-d^-(v_1)|=s(v_1)=\Delta_1$.
This together with \eqref{g} imply that $n\ge |Y|\ge \Delta_1\ge \theta$.
Also note that $d\geq \beta$.
Then using \eqref{eq8} and \eqref{eq9} we have
\begin{align*}
    & 2(2d+1)\left(\frac{m}{2d+1}+\frac{n}{2}-\theta-\frac{\tau}{2}\right)\\[1mm]
\ge\; & 2m+(2d+1)|Y|-(4d+2)\theta-(2d+1)\tau \\[1mm]
\ge\; & \Big(2\beta\theta+2b+2d|Y|\Big)+(2d+1)|Y|-(4d+2)\theta-\Big(|Y|+4\beta\Delta_1-2(\beta+1)\theta+2b\Big)\\[1mm]
 =\; & 4d|Y|-4\beta\Delta_1-4(d-\beta)\theta \\[1mm]
\ge\; & 0.
\end{align*}
Hence \eqref{equ:main3} holds. This completes the proof of Theorem \ref{thm1}.
\qedB

\section{Concluding remarks}
In this paper, we answer Question~\ref{prob} by showing that every digraph $D$ with $m$ arcs
and minimum semidegree $d$ admits a bisection $V(D)=V_1\cup V_2$ such that $\min\{e(V_1,V_2),e(V_2, V_1)\}\geq \frac{d}{2(2d+1)}m+o(m)$.
Most arguments in Section~3 in fact work for any digraph which may be useful for Conjecture~\ref{conj1} (to be precise, only Claim~\ref{cla3} and Subsection~3.4 use the minimum semidegree condition).
We also wonder if the following can be true.

\begin{que}
Is it true that for every integer $d\ge 1$ and sufficiently large $m$, every digraph $D$ with $m$ arcs
and minimum semidegree $d$ admits a bipartition $V(D)=V_1\cup V_2$ such that
\[
\min\left\{e(V_1,V_2),e(V_2, V_1)\right\}\geq \frac{d}{2(2d+1)}m.
\]
\end{que}
\noindent One plausible step towards this question is to remove the term $\varepsilon n$ in Lemma~\ref{lem7}.
It may also be the case that such a bipartition can be replaced by a bisection.

\medskip

{\it E-mail address:} liuguanwu@ustc.edu.cn

\medskip
	
{\it E-mail address:} jiema@ustc.edu.cn

\medskip
	
{\it E-mail address:} zucle@mail.ustc.edu.cn

\end{document}